\def\HAMS{H^{(s)}}
\def\HAML{H^{(l)}}
\def\BARHAMS{\bar{H}^{(s)}}
\def\BARHAML{\bar{H}^{(l)}}
\def\WHAML{\widetilde{H}^{(l)}}
\def\VL{V^{(l)}}
\def\US{U^{(s)}}
\def\UL{U^{(l)}}
\def\BARUL{\bar{U}^{(l)}}
\def\AONE{\mathrm{(A1)}}
\def\ATWO{\mathrm{(A2)}}
\def\CPU{\mathrm{CPU}}
\def\rej{\mathrm{rej}}
\def\multi{\mathrm{multi}}
\def\SUCC{\mathrm{succ}}
\def\NULL{{\mathrm{null}}}
\def\BARIT#1{{\bar {#1}}}
\def\BARHZ{\BARIT{H}^{(0)}}
\def\BARMZ{\BARIT{\mu}^{(0)}}
\def\LATT{{\Lambda}}
\def\LATTN{{\Lambda}_N}
\def\LATTC{{\bar{\Lambda}_{M}}}
\def\COP{\mathbf{T}}
\def\RELENT#1#2{\Rr\left(#1|#2\right)}
\def\LOPER{\mathcal{L}}
\def\Es{\mathbb{E}_{\sigma_0}}
\def\HS{\widetilde{\sigma}}
\def\HU{\widetilde{U}}
\def\st{\{\sigma_t\}_{t\ge 0}}
\def\ast{\{\widetilde\sigma_t\}_{t\ge 0}}
\def\etat{\{\eta_t\}_{t\ge 0}}
\def\StT{\{\sigma_t\}_{t \in [0,T]}}
\def\aStT{\{\tilde \sigma_t\}_{t \in [0,T]}}
\def\muN{\mu_{N,\beta}}
\def\mutN{\widetilde\mu_{N,\beta}}
\def\HC{\widetilde{c}}
\def\HL{\widetilde{\lambda}}
\def\HLl{\widetilde{\Ll}}
\def\HQm{\widetilde {\Dd}_{[0,T]}}
\def\Qm{ {\Dd}_{[0,T]}}
\def\crf{c_{\mathrm{rf}}}
\def\trf{\mathrm{rf}}
\def\Csp{\{\sigma': \COP\sigma'=\eta'\}}
\def\CMspace{\BARIT\Sigma_M}
\def\Linf{L^{\infty}}
\def\BIGO{\mathcal{O}}
\def\VIZ#1{(\ref{#1})}
\def\SEP{\,|\,}
\def\EXPECT#1{\E\left[{#1}\right]}
\def\Dd{\mathcal{D}}
\def\Ll{\mathcal{L}}
\def\Rr{\mathcal{R}}
\def\Oo{\mathcal{O}}
\def\Hh{\mathcal{H}}
\def\E{\mathbb{E}}
\def\R{\mathbb{R}}
\def\EXP#1{e^{#1}}
\def\EXPECT{{\mathbb{E}}}
\def\PROB#1{{\mathbb{P}\left({#1}\right)}}
\def\COMMA{\,,}             
\def\PERIOD{\,.}            
\def\SEP{{\,|\,}}           
\def\VIZ#1{(\ref{#1})}      
\def\BIGO{\Oo}
\newtheorem{algorithm}{Algorithm}
\newtheorem{method}{Method}
\newtheorem{remark}{Remark}[section]
\newtheorem{prop}{Proposition}
\title{Spatial multi-level 
interacting particle simulations and information theory-based error 
       quantification.\thanks{The research of  E.K. was supported by the National Science Foundation under the grant
                              NSF-CMMI-0835582 and the Department of Energy under the grant DE-SC000233; 
                              M.A.K. was partially supported by  the grant NSF-CMMI-0835673;  
                              P.P. was partially supported by the grant NSF-DMS-0813893.
                              }}
               \author{Evangelia Kalligiannaki\footnotemark[3]
                \and Markos A. Katsoulakis \footnotemark[3]
                \and  
                Petr Plech\'a\v{c}\footnotemark[2]
}
\begin{document}

\maketitle
\renewcommand{\thefootnote}{\fnsymbol{footnote}}
\footnotetext[2]{
           Department of Mathematical Sciences,
           University of Delaware,
           Newark, DE 19716, USA 
              ( {\tt plechac@math.udel.edu}). }
\footnotetext[3]{
		Department of Mathematics and Statistics, University
                of Massachusetts,  Amherst, MA 01003, USA
	         ({\tt ekalligi@math.umass.edu}, {\tt  markos@math.umass.edu}). 
                }  
\begin{abstract} 
We propose a hierarchy of multi-level
kinetic Monte Carlo
methods for  sampling high-dimensional, stochastic lattice particle dynamics
with complex interactions. The method is based on the efficient
coupling of different spatial resolution levels, taking advantage of the low sampling
cost in a coarse space and by developing local reconstruction strategies from coarse-grained dynamics. 
Microscopic reconstruction %
corrects possibly significant errors introduced through coarse-graining,  leading to the %
controlled-error approximation of the sampled stochastic process. 
In this manner, the proposed multi-level algorithm overcomes known shortcomings of coarse-graining
of particle systems with complex interactions such as combined long and short-range particle
interactions and/or complex lattice geometries.
Specifically, we provide error analysis   for the approximation of long-time stationary dynamics in terms
of relative entropy 
and prove that information loss in the multi-level methods  is growing linearly in time, which in turn implies 
that an appropriate observable in the stationary regime  is the information loss 
of the path measures per unit time. We show that this observable  can be either estimated {\em a priori},  
or it can be tracked computationally {\em a posteriori} in the course of a simulation.
The stationary regime is of critical importance to molecular simulations as it is  relevant to  
long-time sampling, obtaining phase diagrams  and in studying metastability properties of high-dimensional complex systems.
Finally, the multi-level nature of the method 
provides  flexibility  in combining rejection-free and null-event implementations,  generating  a hierarchy  of   algorithms
 with an adjustable number of rejections that includes  well-known rejection-free and null-event algorithms.
\end{abstract}
\begin{keywords} 
kinetic Monte Carlo, coarse graining, multiple scales, phase transition, information theory, multi-level methods, 
relative entropy, error analysis.
\end{keywords}
\begin{AMS}
 65C05, 65C20, 82C22, 82C20
\end{AMS}
\pagestyle{myheadings}
\markboth{E. Kalligiannaki, M. A. Katsoulakis and P. Plech\'a\v{c}}{Multi-level kinetic Monte Carlo}
\section{Introduction}\label{intro}
One of the widely used computational methods at atomistic scales simulating stochastic particle systems 
is the continuous time or kinetic Monte Carlo (kMC) method. 
The first implementations of rejection-free kinetic Monte Carlo methods in molecular simulations
go back to  the stochastic simulation
algorithm (SSA) of Gillespie for well-mixed systems, \cite{GILLESPIE}, and the
n-fold method or the BKL method of Bortz, Kalos and Lebowitz, \cite{BKL}, for spatially
distributed Ising-type systems. Traditionally kMC algorithms are
serial, explicit time-stepping methods, limiting
their applicability, due to the high computational cost per event (time-step),
to moderate size  systems. The principal part of the cost consists of searching for an event
and updating the reaction rates. In the last decade research works have been
focusing on developing sophisticated search and update techniques to reduce this
computational cost,  \cite{ACDV,SPPARKS09,VOTER}. For example, 
search algorithms have been proposed using binary tree
pointers \cite{GibsonBruck, TS08} in order to obtain $\BIGO(\log N)$ complexity in the size
$N$ of the simulated system. The BKL method
is designed to reduce the cost of the searching step by
lumping  the transition states into classes of the same probability. All these techniques even though they
reduce the computational cost per event are highly demanding in computer
storage and implementation overhead. 
An alternative to the rejection-free methods is based on the {\it uniformization} of the simulated
continuous Markov chain.  The implementation leads to null-event algorithms, 
which at each step require calculation of the rate for only one, arbitrarily chosen, 
event that is  accepted or rejected with a probability provided by the uniformization of the process, \cite{ACDV}.
Although null-event methods reduce significantly the computational cost per
Monte Carlo step they can be highly inefficient when the acceptance probability
of an event becomes small, resulting to exceedingly  small time-steps. 
 
In this work we propose a {\em partially} rejection-free kinetic Monte Carlo method,
the multilevel kinetic coarse-grained Monte Carlo (ML-KMC), for  sampling  high-dimensional 
lattice systems with complex interactions and/or lattice geometries. Our primary interest in
simulating extended systems in which the system size $N$ is large and the behavior is governed by
the infinite volume or thermodynamic limit (i.e., regimes where $N\to\infty$). We distinguish between
long-range interaction potentials, whose range $L \sim N^{1/d}$ is comparable to the system dimensions 
$N^{1/d}$, where $d$ is the dimension of the lattice, and short-range potentials which have a fixed
interaction range $S$ independent of the system size $N$. Typically in the lattice-gas simulations
the long-range potentials are Coulomb or Lenard-Jones and short-range ones are represented by 
nearest-neighbor or next nearest-neighbor potentials. 
Interactions that combine long and short-range potentials are difficult to be handled efficiently
by existing BKL-type algorithms since the number of classes grow exponentially with the
length of the interaction range and thus making an implementation intractable, \cite{ACDV}. 
A possible
remedy is to simulate a system with compressed (coarse-grained) potentials that have a shorter
interaction range. We have demonstrated that smooth long-range potentials can be coarse-grained
with controlled errors leading to highly efficient coarse-grained Monte Carlo (CGMC) methods, \cite{KMV, KPS, KPRT, KPRT2}.
On the other hand a simple coarse-graining of singular, long-range as well as short-range potentials does not
lead to an accurate approximation of the renormalized Hamiltonian and results to loss of important microscopic information,
due to the presence of strong short-ranged spatial correlations.
As demonstrated in \cite{KPRT3, AKPR} 
approximations of coarse-grained Hamiltonians accounting for  short-range or singular potentials necessarily involve multi-body interaction
terms whose implementation can become quickly  computationally expensive. The pitfalls of coarse-graining are
well-known also from molecular simulations of polymeric systems where they produce spurious phase changes or
incorrectly predict existing phase transitions, \cite{doi, KPR}. Similarly, coarse-graining is expected to be challenging in
systems with complex lattice geometries, e.g.  \cite{graphKMC},  that can also induce complicated spatial correlations.

In this paper we demonstrate the use of the proposed ML-KMC method in the efficient and accurate simulation  of such complex systems where coarse-graining either fails or it is computationally very expensive.
We focus on the important example of systems where competing short- and long- ranged interaction forces are present 
and lead to complicated phase diagrams and pattern formation in various physical and chemical systems, \cite{Nature2001,ChVl,LEFK}. 

\section{Overview of the proposed method}
The key ingredient of the proposed method is the
multi-level sampling of the evolution process based on the knowledge of an even less accurate coarse-grained,
meso/macroscopic dynamics. The present study is an extension of \cite{KKP} from the
equilibrium to dynamical sampling sharing the same principle of efficient
coupling of different resolution levels. In \cite{KKP} we proposed a multi-level
Coarse-Grainined Metropolis-Hastings algorithm appropriate for sampling equilibrium
properties of systems. Although the embedded Markov chain generated by the Metropolis algorithm
converges to the correct equilibrium distribution it does not preserve physical dynamics, 
a fact that motivated the present work.
We present the method in a general framework to demonstrate its applicability to on- and off-lattice systems, 
and present in detail the application to stochastic lattice systems with short- and long-range interactions. 

The dynamics of stochastic systems on a countable configuration space $\Sigma$
are determined by a continuous time Markov process $(\st, \Ll) $, with the infinitesimal generator $\Ll: \Linf(\Sigma) \to \Linf(\Sigma) $
defined by the rates $ c(\sigma,\sigma')$, $\sigma,\sigma'\in \Sigma$:
\begin{equation}\label{Gen1}
	\Ll \phi (\sigma) = \sum_{\sigma'\in\Sigma}
	c(\sigma,\sigma')\left(\phi(\sigma')-\phi(\sigma) \right)\COMMA
 \end{equation}
for every observable defined as any $ \phi\in L^{\infty}(\Sigma)$. More specifically,
in kMC we typically  compute expected values of such observables, that is quantities such as
\begin{equation}
   u(\zeta, t):=\EXPECT^\zeta[f(\sigma_t)]=\sum_\sigma f(\sigma)P(\sigma, t; \zeta)\, ,
\label{propagator1}
\end{equation}
conditioned on the initial data $\sigma_0 = \zeta$. 
On the other hand, the evolution of the entire system at any time $t$ is described by the  transition probabilities
$P(\sigma, t ; \zeta):=\PROB{\sigma_{t} = \sigma\SEP \sigma_0 = \zeta}$
where $\zeta \in \Sigma$ is any  initial configuration. The transition probabilities 
satisfy the Forward Kolmogorov Equation ({Master Equation}), \cite{Gardiner04},
\begin{equation}
\partial_t P(\sigma, t ; \zeta)=\sum_{\sigma',
\sigma'\ne \sigma} c(\sigma', \sigma)P(\sigma', t; \zeta)-c(\sigma, \sigma')P(\sigma, t ; \zeta)\, ,
\label{master}
\end{equation}
where $P(\sigma, 0 ; \zeta)=\delta (\sigma-\zeta)$ and $\delta (\sigma-\zeta)=1$ if $\sigma=\zeta$
and zero otherwise.
By a straightforward calculation using \VIZ{master} we obtain that  the observable \VIZ{propagator1}
satisfies the initial value problem
\begin{equation}\label{ODE}
\partial_t u(\zeta, t)=\LOPER u(\zeta, t)\, , \quad\quad u(\zeta, 0)=f(\zeta)\, ,
\end{equation}
The numerical implementation of the evolution of the process is realized with the embedded Markov chain $\{X_n\}_{n\geq 0}$,
$X_n = \sigma_{n\delta t}$ with transition probabilities
\begin{equation}\label{MicroPr}
           p(\sigma,\sigma') =\frac{c(\sigma,\sigma')}{\lambda(\sigma)} \COMMA \;\;\;
           \lambda(\sigma) = \sum_{\sigma'\in \Sigma} c(\sigma,\sigma') \COMMA
\end{equation}
where $p(\sigma,\sigma')$ is the probability of a jump from the state $ \sigma$ to
$\sigma'$. The residence times $\delta t_{\sigma} $ for which the system stays in the state
$\sigma$ before a jump %
 is distributed according to an exponential law with
the parameter $\lambda(\sigma)$. 

\medskip\noindent
{\em Method.} 
The proposed method generates  an  approximate %
process $( \ast,\HLl)$  of the stochastic
process $( \st,\Ll)$. It is based on projecting the
microscopic space $\Sigma$ into a coarse space $\BARIT\Sigma$ with less degrees
of freedom and on the knowledge of a coarse rate function $ \BARIT{c}(\eta,\eta')$
which captures macroscopic information from $c(\sigma,\sigma') $. We denote the
coarse space variables $\eta=\COP\sigma$ defined by a projection
operator $\COP: \Sigma \to \bar{\Sigma}$. 
For example, for stochastic lattice systems that we elaborate on in this work, approximate coarse rate 
functions are explicitly known from coarse graining (CG) techniques  of \cite{KMV,KMV1}. 
In this work we analyze a two-level approach, i.e., coupling two configuration spaces  
$\Sigma$ and $ \bar{\Sigma}$ with different  resolutions, while a multi-level extension 
can be considered analogously. 
The ML-KMC  method consists of the following steps, a schematic description is demonstrated  in
Figure~\ref{scheme}:
\begin{enumerate}[i)]
\item Construct a computationally inexpensive  approximating CG process
      on the coarse space $ \BARIT\Sigma$ described by a coarse generator $ \BARIT
      \Ll$ with rates $\BARIT{c}(\eta, \eta')$. 
\item Define the ``reconstruction" rates $\crf(\sigma'|\eta',\sigma)$
      constrained on the updated coarse state $\eta'$ that are simple to simulate and
      such that
\begin{equation*}
     \BARIT c(\eta,\eta') \crf(\sigma'|\eta',\sigma) \ \textrm{ {\it approximates} } \ c(\sigma,\sigma') \PERIOD
\end{equation*}
\end{enumerate}
The approximation and its error is quantified in
Section~\ref{sectionError}.
The overall procedure can be thought as a
reconstruction in dynamics of stochastic processes from a CG process.
Furthermore, this procedure generates stochastic processes  that are 
{\it controlled error}  
approximations of the process $(\st, \Ll)$, determined by the   reconstruction rates $\crf(\sigma'|\eta',\sigma) $ 
and the level of coarsening.  The function $\crf(\sigma'|\eta',\sigma) $ enriches the CG procedure by re-inserting  details that were smoothed out
by the coarsening procedure.
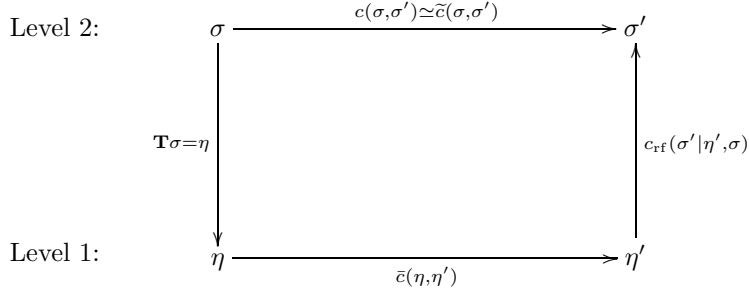
\begin{figure}[t]
 \centerline{
  \xymatrixrowsep{1in}
  \xymatrixcolsep{0.02in}
     \xymatrix{ \text{Level 2:} & \\
     \text{Level 1:} &
  }
  \xymatrixrowsep{1in}
  \xymatrixcolsep{2in}
     \xymatrix{
        \sigma \ar[r]^{c(\sigma,\sigma') \simeq \HC(\sigma,\sigma')}
         \ar[d]_{\COP\sigma=\eta} &
         \sigma' \\
         \eta \ar[r]_{\BARIT{c}(\eta,\eta')} & \eta' \ar[u]_{\crf(\sigma'|
         \eta',\sigma) }  
  }\label{scheme}
 }\caption{Two-level decomposition, compressing with $\COP$ and reconstructing
          with $c_{\trf}$, of the evolution process per event.}
\end{figure}
\medskip\noindent
{\it Implementation}. The multilevel nature of the method provides the flexibility of
combining rejection-free and null-event implementation algorithms at each resolution 
and consists of:
\begin{enumerate}[i)]
   \item  A rejection-free algorithm for sampling in the coarse space with $\BARIT
          c(\eta,\eta')$ where the reduction of the computational cost compared to
          microscopic sampling is significant due to the compression of spatial scales and
          interactions  range. 
          The rejection-free algorithm selects the most probable coarse
          state $\eta'$ that the system will evolve.
   \item A null-event algorithm for sampling at the fine space with
         $c_{\trf}(\sigma'|\eta',\sigma) $, however with a low rejection rate due to the fact
         that $ \eta'$ was chosen as the most probable event.
\end{enumerate}
This partially rejection-free implementation approach suggests a non-constant
time step update in contrast to null-event methods where the time update is uniform for all system states.
Non-constant time step updating algorithms have been proposed in \cite{ABL} designing a class of kinetic Monte Carlo 
algorithms with an adaptive time step  interpolating between BKL and
null-event algorithm. The variable time step in these  algorithms is based on the adaptively improved upper 
bounds of the exponential parameter controlling the time step distribution, while  in our proposed method it is the result of 
the  combination of BKL  and rejection-free implementations on different  resolution state spaces.     
However, one can  further enhance the ML-KMC method with an  adaptive   time step kinetic Monte Carlo in the spirit of \cite{ABL},  in view of the freedom on 
the choice of implementation techniques  in each level.

\medskip\noindent
{\it Analysis.} 
We provide numerical analysis for (a) finite-time weak error estimates  and (b) long-time stationary dynamics for the proposed ML-KMC algorithm.
The primary challenge in the finite-time weak error estimates is to obtain bounds that are independent of  the high-dimension of the interacting particle system,
and this is accomplished by focusing on suitable macroscopic observables.
However, this technique leads to estimates that involve constants that grow 
exponentially in time, as is the case also in many of the classical numerical analysis estimates for stochastic differential equations 
or partial differential equations that rely on Gronwall inequality-type  arguments. In order to overcome this difficulty we develop a different approach that
estimates the error in long-time behavior, i.e.  in stationary regimes for reversible or irreversible  processes,
using the relative entropy of the path measure.
In kMC  as
well as in molecular simulations in general, we are often primarily interested in long-time,
stationary regimes, including (i) the behavior of the stationary measure and its
sampling, as well as (ii) stationary dynamics, i.e. dynamics where the initial
measure is the stationary distribution reached after long-time integration. 
The latter
is an especially important regime describing dynamic transitions between
metastable states in complex energy landscapes, while at this regime we construct the system's phase diagrams.

The error analysis  study we present reveals that the  relevant quantity for assessing 
long-time simulations in the stationary regime  is the {\it information loss per unit time}  in the path space measure.
In fact, the issues related to the error analysis in the stationary dynamics regime is the primary novelty in our paper from a numerical analysis perspective
and they are
not restricted only to the multilevel  kMC algorithms; they are ubiquitous for
numerical approximations of reversible and irreversible stochastic dynamics such as Langevin stochastic
differential equations (SDEs), thermostatted dynamics, dissipative particle dynamics (DPD) methods, etc. 
For such systems there is a wealth of
approximating schemes, the simplest being in the time stepping, e.g., explicit,
implicit, predictor-corrector, operator splitting, etc. 
We expect that  our proposed entropy-based
perspective  could be used to assess such numerical schemes at the stationary
dynamics regime in a quantifiable manner, in a variety of stochastic,  extended as well as finite-dimensional 
systems.
We begin this work in Section~\ref{sectionKMC} presenting the  method  and continue with Section~\ref{implementations} 
proposing  an efficient implementation strategy leading to a partially rejection free method.  Application of the method in stochastic lattice
systems is presented in Section~\ref{stlattice} for adsorption-desorption
Arrhenius dynamics. In Section~\ref{sectionError} we prove
error estimates that provide a quantitative control of the approximating
process. 
Section~\ref{Complexity} establishes computational efficiency of the method over conventional sampling
techniques.   The performance of the method is tested in
Section~\ref{sectionArrheniuskMC-Null} using a benchmark model of Arrhenius
dynamics with a competing short- and long-range interaction potential.

\section{Multilevel  kinetic Monte Carlo}\label{sectionKMC}
The ML-KMC method is a kinetic Monte Carlo method   
generating %
{\it controlled-error approximate} dynamics of the
stochastic process $( \{\sigma_t\}, \Ll)_{t\ge 0}$,  based on the
decomposition of the rate function $c(\sigma,\sigma') $ into  coarse  and corresponding
reconstructing terms, such that
 \begin{equation*} 
  c(\sigma,\sigma') \approx \HC(\sigma,\sigma')= \prod_{i=0}^{I}\BARIT{c}^{(i)}(\eta_i,\eta'_i) \crf^{(i)}(\eta'_{i-1}|\eta'_i,\eta_{i-1})\COMMA
 \end{equation*}
where $\eta_0\equiv \sigma$ and $  \eta_i =\COP_{i-1}\eta_{i-1}, i=1\dots,I  $ are  variables  in a   
hierarchy of coarse spaces with the decreasing numbers of degrees of  freedom.
 Coarsening consists of projecting the microscopic space into a coarse space $\BARIT\Sigma$ with less degrees of freedom
for which a coarse rate function $\BARIT c(\eta,\eta')$ is appropriately defined, where $\eta\in \bar{\Sigma}$ denotes the coarse 
space variables defined by a projection operator $\COP:\Sigma \to \bar{\Sigma}$,  $\COP\sigma=\eta$.

For the sake of simplicity we present the two-level method  (ML-KMC) while every step in the study 
that follows can be easily adopted to the multilevel case.
The construction of the ML-KMC, sketched in Figure~\ref{scheme}, consists of two steps. 
In the {\it first step} we construct an approximating process on
the coarse space $ \BARIT\Sigma$ described by a generator $\BARIT \Ll$ with
rates $\BARIT{c}(\eta,\eta')$, extracting macroscopic information from the rates
$c(\sigma,\sigma') $. 
In the {\it second step} we construct rates
$\crf(\sigma'|\eta',\sigma)$, simple to simulate, and such that
$\BARIT{c}(\eta,\eta') \crf(\sigma'|\eta',\sigma)$ approximates 
$c(\sigma,\sigma')$ with an error quantified in Section~\ref{sectionError}.
Following the above description we define  a stochastic process   with rates 
 \begin{equation}\label{MRates}
  \HC(\sigma,\sigma')= \BARIT{c}(\eta,\eta') \crf(\sigma'|\eta',\sigma)\COMMA
 \end{equation}
that generates a corresponding continuous time Markov chain with transition probabilities 
  \begin{equation}\label{MultiPr}
      \widetilde p(\sigma,\sigma') =\frac{\HC(\sigma,\sigma')}{\HL(\sigma)} \COMMA \;\;\;
      \HL(\sigma) = \sum_{\sigma'\in \Sigma} \HC(\sigma,\sigma') \PERIOD 
\end{equation}
The corresponding continuous time process   $ (\{\HS_t\}_{t\ge 0},\HLl)$ is defined  
by the generator $\HLl$ on $\Sigma$, 
 \begin{equation}\label{MGen}
	\HLl \phi (\sigma) = \sum_{\sigma'\in\Sigma}
	\HC(\sigma,\sigma')\left(\phi(\sigma')-\phi(\sigma) \right)\COMMA
 \end{equation}
for every $ \phi\in L^{\infty}(\Sigma)$.
In the application we study in Section~\ref{ArrheniusMKMC} such coarse
 and reconstructing rates  are explicitly defined, 
both for exact and for controlled error sampling approaches.
In Section~\ref{sectionError} we provide  estimates that
quantify the approximating errors, for  finite   and long-time regimes, with respect to the level of 
resolution and the interpretation of the rate function decomposition.
Furthermore, controlled approximations such as \VIZ{MRates} can have
significant computational advantages, see Section~\ref{Complexity}.

\subsection{Partially rejection-free implementation of the ML-KMC method}\label{implementations}
Rejection-free methods are based on calculating (updating) rates $
c(\sigma,\sigma')$ for all $ \sigma'\in \Sigma$ at each Monte Carlo step and
choosing (searching) an event that evolves the system's state based on the
probabilities $p(\sigma,\sigma')$, see \VIZ{MicroPr}. In these methods every step
provides a successful event but the cost of implementation becomes formidable
for large complex systems where the cost increases with the system size
$|\Sigma|$ and with the complexity of the transition rates. 
Uniformization  provides a solution to this problem, suggesting a
method that needs the calculation only of one transition probability at each
step with the disadvantage of introducing rejections; a number of proposed
events will not happen. Such methods are known as null-event methods, that
generate a Markov jump process with the embedded Markov chain described by
\begin{equation}\label{NullMP}
   p^{\NULL}(\sigma,\sigma') = \begin{cases}
           1-\frac{\lambda(\sigma)}{\lambda^* }\COMMA & \mbox{if $ \sigma'=\sigma$}\\
           \frac{\lambda(\sigma) }{\lambda^*}p(\sigma,\sigma')\COMMA & \mbox{if $\sigma'\neq\sigma$}
        \end{cases}
\end{equation}
where $\lambda^*$ is a uniform upper bound of $\lambda(\sigma)$ in \VIZ{MicroPr}.
The time that the system stays at state $\sigma$, $\tau^{\NULL} $,  is governed by an exponential distribution
with  the parameter $\lambda^*\ge \lambda(\sigma)$  for all   $\sigma\in \Sigma$. 
As a result since $1/\lambda^*=\E[\tau^{{\NULL}}]\le \E[\tau_{\sigma}]=1/\lambda(\sigma)$ for all   $\sigma\in \Sigma$, 
the null event algorithm evolves the system with a smaller time step and
 needs more MC steps than a rejection free method. Despite this
inefficiency,   the significant reduction of the computational cost per MC step can be
 advantageous  for high dimensional complex systems when compared to rejection free methods.
The probability of rejecting a proposed state when the system is in the  state $\sigma$ is
 \begin{equation}\label{nullRej}
  p_{\rej}^{\NULL}(\sigma) = 1 - \frac{\lambda(\sigma)}{ \lambda^*}\COMMA
\end{equation}
and it is controlled by   $\lambda^*$, indicating that the tighter the 
upper bound is the less rejections are introduced.  
The combination of rejection-free and null-event techniques at the two  levels of the ML-KMC method 
introduces a variety of sampling techniques.   Here we propose, as the more  efficient approach,  the  rejection-free method
at the coarse level and the null-event algorithm at the microscopic level, 
balancing between an improved rejection rate of the null-event  and the computational complexity 
of the rejection-free method.   
Given $\sigma \in \Sigma$, $\eta = \COP \sigma$ the evolution of the system to 
a state $\sigma'\in \Sigma$ is achieved by the following: 

\medskip\noindent
\begin{method}\label{kMC-Null} ML-KMC   
{\rm
\begin{description}
\item[Coarse level.]
Evolve to a new  coarse state $\eta'\in \BARIT \Sigma $ with the probability
\begin{align*} 
     \BARIT p (\eta,\eta') = \frac{ \BARIT c(\eta,\eta')}{\BARIT\lambda(\eta)}\COMMA \ \ \
     \BARIT\lambda(\eta)=\sum_{\eta' \in \bar \Sigma } \BARIT c(\eta,\eta') \PERIOD
\end{align*}
\item[ Microscopic level.]
Select randomly (uniformly) $\sigma'\in \Sigma $ under the constraint $\COP \sigma' = \eta'$,
and accept it with the probability
\begin{equation*} 
     p_{\trf}(\sigma'|\eta',\sigma)=
     \frac{c_{\trf}(\sigma'|\eta',\sigma)}{\lambda_{{\trf}}(\sigma)}\COMMA\quad
     \lambda_{\trf}(\sigma)=\max_{\eta'} \sum_{\Csp} \crf(\sigma'|\eta',\sigma)\COMMA
\end{equation*}
or reject it with the probability
\begin{equation*} 
      1- \sum_{\Csp} p_{\text{rf}}(\sigma'|\eta',\sigma) \PERIOD
\end{equation*}
For implementation specifics of this step we refer to  
Section~\ref{Complexity}.
\item[Time update.]
Update time by a random time step with an exponential law with parameter 
\begin{equation}\label{UpLammda} \HL^*(\sigma) =
   \BARIT{\lambda}(\eta)\lambda_{\trf}(\sigma)\PERIOD
\end{equation}
\end{description}}
\end{method}

With  the following lemma we prove that the ML-KMC method provides correctly a
{\it partial uniformization} of the rejection-free method.
\begin{lemma}\label{MkMC_lem}
   For any    $\sigma\in \Sigma$ we have
\begin{equation*} 
   \HL(\sigma) \le  \HL^*(\sigma)  \PERIOD
\end{equation*}
\end{lemma}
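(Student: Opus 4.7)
The plan is to exploit the product structure of $\HC(\sigma,\sigma')$ in \VIZ{MRates} together with the fact that the projection $\COP:\Sigma\to\bar\Sigma$ partitions the microscopic space into the constraint sets $\Csp$ indexed by $\eta'\in\bar\Sigma$. This reduces the inequality to a routine estimate: bound each partial sum over a fibre by its maximum over $\eta'$, then recognize the remaining coarse sum as $\BARIT\lambda(\eta)$.

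Concretely, I would first unfold the definition of $\HL(\sigma)$ from \VIZ{MultiPr}, substitute \VIZ{MRates}, and reorganize the sum over $\sigma'\in\Sigma$ as a double sum indexed by the image $\eta'=\COP\sigma'\in\bar\Sigma$ and by $\sigma'$ in the fibre $\Csp$:
\begin{equation*}
  \HL(\sigma) \;=\; \sum_{\sigma'\in\Sigma} \BARIT c(\COP\sigma,\COP\sigma')\,\crf(\sigma'|\COP\sigma',\sigma)
   \;=\; \sum_{\eta'\in\bar\Sigma}\BARIT c(\eta,\eta') \sum_{\Csp}\crf(\sigma'|\eta',\sigma)\COMMA
\end{equation*}
where we used that $\BARIT c(\eta,\eta')$ is constant on each fibre and can therefore be pulled outside the inner sum.

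Next I would apply the pointwise bound
\begin{equation*}
  \sum_{\Csp}\crf(\sigma'|\eta',\sigma) \;\le\; \max_{\eta'\in\bar\Sigma}\sum_{\Csp}\crf(\sigma'|\eta',\sigma) \;=\; \lambda_{\trf}(\sigma)\COMMA
\end{equation*}
which is valid uniformly in $\eta'$ by definition of $\lambda_{\trf}(\sigma)$ in Method~\ref{kMC-Null}. Factoring $\lambda_{\trf}(\sigma)$ out of the remaining sum over $\eta'$ and recognizing $\sum_{\eta'\in\bar\Sigma}\BARIT c(\eta,\eta')=\BARIT\lambda(\eta)$, we obtain $\HL(\sigma)\le \BARIT\lambda(\eta)\lambda_{\trf}(\sigma) = \HL^*(\sigma)$, which is \VIZ{UpLammda}.

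There is no real obstacle here — the only thing to be careful about is that the non-negativity of $\BARIT c$ and $\crf$ is what legitimizes replacing the inner sum by its maximum inside a sum weighted by $\BARIT c(\eta,\eta')\ge 0$. The inequality is sharp precisely when the fibre sum of $\crf$ is independent of $\eta'$, which is the situation one tries to engineer in the implementation so that the ``partial uniformization'' introduces as few rejections as possible, consistent with \VIZ{nullRej}.
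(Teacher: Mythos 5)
Your argument is correct and is essentially identical to the paper's proof: both decompose $\HL(\sigma)$ as a double sum over $\eta'\in\bar\Sigma$ and the fibres $\Csp$, bound the inner sum by $\lambda_{\trf}(\sigma)$, and identify the remaining coarse sum with $\BARIT\lambda(\eta)$. Your added remarks on non-negativity and on when the inequality is sharp are sound but not needed beyond what the paper records.
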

\begin{proof}
{\rm 
The statement follows directly from the simple calculation
$$
   \HL(\sigma)=\sum_{\sigma'\in \Sigma} \HC(\sigma, \sigma')=
   \sum_{\eta'\in \bar\Sigma}\BARIT c(\eta, \eta') \sum_{\Csp } 
   \crf(\sigma'|\eta',\sigma) \le \BARIT\lambda(\eta)
   \lambda_{\trf}(\sigma)\PERIOD
$$
}
\end{proof}

The real time updates, controlled by
$\HL^*(\sigma) $, depend on the state of the system, while in \VIZ{NullMP}
the time step is uniform for all states, controlled by $\lambda^*$.
The multilevel method  has the rejection probability at the state $\sigma$,
\begin{equation}\label{MLRejPr}
    p_{\rej}^{\multi}(\sigma) = 1 - \frac{\HL(\sigma)}{\BARIT\lambda(\eta)
   \lambda_{\trf}(\sigma)}=1 - \frac{\HL(\sigma)}{\HL^*(\sigma)}   
   \COMMA
\end{equation}
since
\begin{align*}
     p_{\rej}^{\multi}(\sigma)& = 1 - \sum_{\sigma'\in \Sigma}\mathrm{Prob}(\sigma\to \sigma')= 1
     - \sum_{\eta'\in\bar\Sigma}\sum_{\Csp}
     \frac{\bar{c}(\eta,\eta')\crf(\sigma'|\eta',\sigma)}{\BARIT\lambda(\eta)
          \lambda_{\trf}(\sigma)} \\
      &=1 - \sum_{\sigma'\in\Sigma}
     \frac{\HC(\sigma,\sigma')}{\BARIT\lambda(\eta)\lambda_{\trf}(\sigma)}= 1 -
     \frac{\HL(\sigma)}{\BARIT\lambda(\eta) \lambda_{\trf}(\sigma)} \PERIOD\\
\end{align*}

Next, we show that the rejection rate of the ML-KMC  method can be controlled and depends on the approximation
of the coarse-grained rates.
Before stating the proposition we note that a process $\ast$ is defined as lumpable, (\cite{Lumpability}), with respect to the coarsening procedure $\eta=\COP\sigma$
when its rates satisfy
$$
\sum_{\Csp} \HC(\sigma,\sigma')=\BARIT c(\eta,\eta')\PERIOD
$$
This relation  implies uniform reconstruction rates  $\crf(\sigma'|\eta',\sigma)=1/|\Csp|$ for all $\sigma'\in \Csp$, hence 
$ \sum_{\Csp} \crf(\sigma'|\eta',\sigma) =1$, $ \lambda_{\trf}(\sigma) =1$,
\begin{eqnarray*} 
    \HL (\sigma)&=&\sum_{\sigma'\in \Sigma} \HC(\sigma,\sigma')
                =\sum_{\eta'\in \bar\Sigma}\sum_{\Csp } \HC(\sigma,\sigma')=
                   \sum_{\eta'\in \bar\Sigma}\BARIT c(\eta,\eta')
                =\BARIT\lambda(\eta) \COMMA
\end{eqnarray*}
and
$$  
   p_{\rej}^{\multi}(\sigma)= 1 - \frac{\HL (\sigma) }{\BARIT \lambda(\eta) \lambda_{\trf}(\sigma)}=0\PERIOD
$$
On the other hand, the approximating process $\ast$ in ML-KMC is not necessarily lumpable, nevertheless its approximation error determines the
rejection probability: 
 \begin{proposition}\label{Proposition1}
Let the  coarse rates define an approximately lumpable process, that is
  \begin{equation}\label{assumption1}
        \sum_{\Csp} \HC(\sigma,\sigma')=\BARIT c(\eta,\eta') + \BIGO(\epsilon)\COMMA
  \end{equation}
   uniformly in $\sigma, \eta=\COP\sigma,\eta' $ for some $\epsilon >0$.
   Then 
   $$ 
      p_{\rej}^{\multi}(\sigma)=\BIGO(\epsilon)\PERIOD
   $$
\end{proposition}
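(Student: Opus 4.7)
The plan is to start from the explicit formula for the rejection probability already derived just above the statement, namely
$$
  p_{\rej}^{\multi}(\sigma) \;=\; 1 \;-\; \frac{\HL(\sigma)}{\BARIT\lambda(\eta)\,\lambda_{\trf}(\sigma)} \COMMA
$$
and to show that both $\HL(\sigma)$ and $\BARIT\lambda(\eta)\,\lambda_{\trf}(\sigma)$ agree with $\BARIT\lambda(\eta)$ up to an $\BIGO(\epsilon)$ correction, so that their ratio is $1+\BIGO(\epsilon)$.

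First I would use the factored form \VIZ{MRates} to rewrite $\sum_{\Csp}\HC(\sigma,\sigma')=\BARIT{c}(\eta,\eta')\sum_{\Csp}\crf(\sigma'|\eta',\sigma)$, so that the hypothesis \VIZ{assumption1} translates into
$$
  \BARIT{c}(\eta,\eta')\!\sum_{\Csp}\!\crf(\sigma'|\eta',\sigma)\;=\;\BARIT c(\eta,\eta')+\BIGO(\epsilon)\PERIOD
$$
Summing over $\eta'\in\BARIT\Sigma$, and noting that $\HL(\sigma)=\sum_{\eta'}\sum_{\Csp}\HC(\sigma,\sigma')$ as in the proof of Lemma~\ref{MkMC_lem}, yields $\HL(\sigma)=\BARIT\lambda(\eta)+\BIGO(\epsilon)$, where the implicit constant is controlled by the (uniformly bounded) number of coarse neighbors $\eta'$ accessible from $\eta$, a standard structural assumption for the lattice rates used here.

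Next I would estimate $\lambda_{\trf}(\sigma)=\max_{\eta'}\sum_{\Csp}\crf(\sigma'|\eta',\sigma)$. Dividing the identity above by $\BARIT c(\eta,\eta')$ (for those $\eta'$ where $\BARIT c(\eta,\eta')$ is bounded below by a positive constant, which is the relevant case) gives $\sum_{\Csp}\crf(\sigma'|\eta',\sigma)=1+\BIGO(\epsilon)$ uniformly in $\eta'$, so taking the maximum produces $\lambda_{\trf}(\sigma)=1+\BIGO(\epsilon)$. Multiplying by $\BARIT\lambda(\eta)$ then gives $\BARIT\lambda(\eta)\lambda_{\trf}(\sigma)=\BARIT\lambda(\eta)+\BIGO(\epsilon)$.

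Combining these two expansions, the numerator and denominator of $\HL(\sigma)/[\BARIT\lambda(\eta)\lambda_{\trf}(\sigma)]$ differ by $\BIGO(\epsilon)$, and a one-line Taylor expansion of $1-(a+\BIGO(\epsilon))/(a+\BIGO(\epsilon))$ around the common value $a=\BARIT\lambda(\eta)$ gives $p_{\rej}^{\multi}(\sigma)=\BIGO(\epsilon)$. The main obstacle I expect is not the algebra but the uniformity: one needs to check that the implicit constants in the $\BIGO(\epsilon)$ terms are independent of $\sigma$ and of the system size, which is where the assumed uniformity in \VIZ{assumption1}, the uniform boundedness of the number of accessible coarse transitions $\eta\to\eta'$, and a uniform positive lower bound on $\BARIT\lambda(\eta)$ (equivalently, exclusion of degenerate states where no transition is possible) all enter.
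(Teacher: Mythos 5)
Your argument follows the paper's proof essentially line for line: factor $\sum_{\Csp}\HC=\BARIT c(\eta,\eta')\sum_{\Csp}\crf$, deduce $\sum_{\Csp}\crf(\sigma'|\eta',\sigma)=1+\BIGO(\epsilon)$ and hence $\lambda_{\trf}(\sigma)=1+\BIGO(\epsilon)$, sum over $\eta'$ to expand $\HL(\sigma)$ around $\BARIT\lambda(\eta)$, and read off the rejection probability from \VIZ{MLRejPr}. The algebra is correct and the structure matches.

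There is, however, one point where your uniformity bookkeeping diverges from the paper's and where your stated sufficient condition would actually fail for the systems the proposition is meant to cover. Summing the pointwise error $\BIGO(\epsilon)$ over $\eta'$ produces a term $|\BARIT\Sigma_\eta|\,\BIGO(\epsilon)$, where $\BARIT\Sigma_\eta=\{\eta':\BARIT c(\eta,\eta')>0\}$. You control this by assuming the number of accessible coarse states is uniformly bounded, calling it ``a standard structural assumption for the lattice rates used here'' --- but for the spin-flip dynamics of Section~4 one has $|\BARIT\Sigma_\eta|\sim M$ (an adsorption and a desorption channel per coarse cell), which grows with the system size, so the summed error is \emph{not} $\BIGO(\epsilon)$ in absolute terms. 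Your fallback, a uniform positive lower bound on $\BARIT\lambda(\eta)$, is likewise insufficient. The paper closes this gap differently: it observes that $\BARIT\lambda(\eta)=\sum_{\eta'}\BARIT c(\eta,\eta')$ itself scales like $|\BARIT\Sigma_\eta|$, so the \emph{relative} error $|\BARIT\Sigma_\eta|\,\BIGO(\epsilon)/\BARIT\lambda(\eta)$ is $\BIGO(\epsilon)$ after division in \VIZ{MLRejPr}. This is the one place where the normalization by $\BARIT\lambda(\eta)\lambda_{\trf}(\sigma)$ is doing real work rather than bookkeeping, and your write-up should replace the bounded-neighborhood assumption by the scaling $\BARIT\lambda(\eta)\sim|\BARIT\Sigma_\eta|$ to make the bound genuinely uniform in $N$ and $M$.
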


\begin{proof}
First, we denote by $\BARIT \Sigma_{\eta}=\{ \eta': c(\eta, \eta')>0\}$, that is all coarse configurations accessible in a single step from $\eta$.
Assumption \VIZ{assumption1}  implies  that
        \begin{equation}
       \BARIT c(\eta,\eta') \sum_{\Csp} \crf(\sigma'|\eta',\sigma)=\BARIT c(\eta,\eta')  + \BIGO(\epsilon)\COMMA  \sum_{\Csp} \crf(\sigma'|\eta',\sigma) = 1+\BIGO(\epsilon)\COMMA
     \end{equation}
   hence $\lambda_{\trf}(\sigma) =\max_{\eta'} \sum_{\Csp} \crf(\sigma'|\eta',\sigma)= 1+\BIGO(\epsilon)$. 
  Then
  \begin{eqnarray*}
     \HL (\sigma) = \sum_{\sigma'} \HC(\sigma,\sigma') &=&
     \sum_{\eta'} \BARIT c(\eta,\eta')\sum_{\Csp} \crf(\sigma'|\eta',\sigma) 
     = \sum_{\eta'}( \BARIT c(\eta,\eta') +   \BIGO(\epsilon) ) \\
     &=& \BARIT \lambda(\eta) + |\BARIT \Sigma_{\eta}| \BIGO(\epsilon)\COMMA
  \end{eqnarray*}
  and 
  \begin{eqnarray*}
       \frac{\HL (\sigma)}{\BARIT \lambda(\eta) \lambda_{\trf}(\sigma)}= \frac{\BARIT\lambda(\eta) + 
            |\BARIT \Sigma_{\eta}| \BIGO(\epsilon)}{\BARIT \lambda(\eta)\lambda_{\trf}(\sigma)}= 1+ \BIGO(\epsilon)\COMMA
  \end{eqnarray*}
  where in the last equality we have used   that     $\lambda_{\trf}(\sigma)=1+\BIGO(\epsilon)$ and $ \BARIT \lambda(\eta)\sim |\BARIT\Sigma_{\eta}| $ 
  in view of the definition $ \BARIT \lambda(\eta)=\sum_{\eta' \in \bar \Sigma } \BARIT c(\eta,\eta') $.
  Therefore  the rejection probability \VIZ{MLRejPr} satisfies
  \begin{equation*}
     p_{\rej}^{\multi}(\sigma) = 1 - \frac{\HL (\sigma) }{\BARIT \lambda(\eta) \lambda_{\trf}(\sigma)}
         =  \BIGO(\epsilon)\PERIOD 
  \end{equation*}
\end{proof}

%
%

\medskip\noindent
\begin{remark}
{\rm
Other implementation strategies can be designed, for instance employing   a rejection-free
method at the second level and/or a null-event method at the first level. However, when
$\lambda_{\trf}(\sigma,\eta') := \sum_{\Csp} \crf(\sigma'|\eta',\sigma) $ the
method can be implemented by a rejection-free algorithm at the microscopic level  but  
the process generated will violate the Markovian property unless 
$\lambda_{\trf}(\sigma)=\lambda_{\trf}(\sigma,\eta') $ for all $\eta'\in \BARIT{\Sigma}$.%
Another   possible modification in the algorithm  is performing a null-event  at the
coarse level combined with a null-event algorithm at the microscopic level. In this approach the need of a uniform normalization
constant due to the null event nature in both steps is a disadvantage since then the time
update is small. In fact the average time update will be inversely proportional to
$$
    \widetilde \lambda^*=N \max_{\eta,\eta'}\{\BARIT{c}(\eta,\eta') \}
    \max_{\sigma',\sigma}\{\crf(\sigma'|\eta',\sigma)\} \le \lambda^*\COMMA 
$$ 
which  indicates that the method will have higher rejection rate 
than a conventional null event algorithm.
Even though the last two implementation techniques  seem to have disadvantages, we expect that application  in case studies 
could be effective, for example when $\widetilde \lambda^*\simeq \lambda^*$  
and/or the computational acceleration due to coarsening is significant.
}
\end{remark}

\section{Applications to complex interacting particle  systems}\label{stlattice}
We demonstrate the use of the ML-KMC method in the efficient and accurate simulation  of complex systems where coarse-graining either fails or it is computationally very expensive.
In this paper we focus on the important example of systems with  competing short- and long- ranged interaction; such systems typically have  complicated phase diagrams, may exhibit pattern formation 
and arise in numerous  physical and chemical systems, \cite{Nature2001,ChVl,LEFK}. We discuss such a system next and use it as a demonstration  example for the ML-KMC method in the following Sections.

\subsection{Microscopic dynamics}\label{microdynamics}
We consider an Ising-type system on a periodic $d$-dimensional lattice $\LATT$ with $N=n^d$ lattice points. 
At each $x \in \LATT$ we can define an order parameter $\sigma(x)$. For
example, when taking values $0$ and $1$, it can describe vacant
and occupied sites. The microscopic dynamics are described by a continuous time
Markov chain with state space $\Sigma_N=\{ 0,1\}^{\LATT}$. For a configuration
$\sigma$ we denote by $\sigma^x$ the configuration which differs from $\sigma$
by an order parameter flip at the site $x$. The configuration update
$\sigma\to\sigma^x$ occurs with a rate $c(x,\sigma )$, i.e., the
order parameter at $x$ changes over the time interval $[t, t+\Delta t]$ with the
 probability $c (x,\sigma)\Delta t +o(\Delta t)$. The resulting stochastic
process $(\st,\Ll)$ is a continuous time Markov jump process with the  
generator %
defined in terms of the rate $c(x,\sigma )$ by \VIZ{Gen1}.
In this work
we present as an example the dynamics with the Arrhenius rate for the spin-flip (adsorption-desorption) 
mechanism
\begin{equation}  \label{arrh}
  c(x, \sigma)=d_0\big(1-\sigma(x)\big)+
  d_0\sigma(x)\exp\big[-\beta U(x, \sigma)\big]\COMMA
\end{equation}
where  $U(x,\sigma)= H_N(\sigma) - H_N(\sigma^x)$ is the interaction energy of a particle located at the lattice site 
$x\in\LATT$. Arrhenius laws are typically used  in micro-kinetic modeling of physiochemical applications, 
see for instance \cite{graphKMC}. The Hamiltonian function $H_N:\Sigma_N\to\R$ defines the total energy 
of the configuration $\sigma\in\Sigma_N$
and we consider pair-wise long- and short-range interactions %
\begin{equation}\label{HamSplit}
    H_N(\sigma) =\HAMS(\sigma) + \HAML(\sigma) + \sum_{x\in\LATT} h(x)\sigma(x) \COMMA
\end{equation}
where $h=h(x)$ is an external field and 
$$
\HAML = \sum_{x\neq y} J(x-y) \sigma(x)\sigma(y)\COMMA\;\;\;\;
\HAMS = \sum_{x\neq y} K(x-y) \sigma(x)\sigma(y)\PERIOD
$$
We consider interaction potentials $J$, $K:\R^d\to\R$ depending on the distance of $x$ and $y$, 
where the distance $|x-y|$ does not have to be necessarily 
measured in the Euclidean norm. Furthermore,  we assume:
\begin{eqnarray}
   \AONE\;\;    && J(x-y) \equiv \frac{1}{L^d} \VL\left(\frac{n}{L} |x-y|\right)\COMMA\;\; \mbox{$x,y\in\LATT$ and $L>0$} \label{microJL} \\
                && \VL\in C^1(\R)\COMMA\; \VL(-r) = \VL(r)\COMMA\;\VL(r)  = 0\COMMA\;\mbox{for $r>1$} \nonumber \\
   \ATWO\;\;    && \mbox{$K\in L^1_{\mathrm{loc}}(\R)$ and $K(x-y) \neq 0$ for $|x-y|\leq S$} \label{microK} \\
                && \mbox{and $S$ is independent of the lattice size $N$.} \nonumber
\end{eqnarray}
The parameters $L$ and $S$ then define a range of interactions, i.e., the number of particles interacting with a given particle at the site
$x$. The parameter $L$ can be equal to $n$, i.e., interactions with all particles. The scaling in  $\AONE$ ensures that in the 
infinite volume limit $N\to\infty$,  $J\in L^1(\R)$. %
Note that the regularity condition imposed on $\VL$ rules out singular potentials such as
Coulomb, Lenard-Jones etc. However, such cases can be treated in our analysis by splitting the potential into a smooth part (with $L=n$)
and the short-range singular part. The range parameter $S \ll n$ is fixed and independent of $n$. With this form of the Hamiltonian
the energy difference can be expressed as
\begin{equation}\label{microSL}
  U(x, \sigma)=\sum_{y\in\mathcal{I}^{(s)}(x)} K(x-y)\sigma(y) + \sum_{y\in \in\mathcal{I}^{(l)}} J(x-y)\sigma(y)-h(x) = 
               \US(x, \sigma) + \UL(x, \sigma)\COMMA
\end{equation}
where the size of the support $|\mathcal{I}^{(s)}| = S^d = \BIGO_N(1)$ and with $L\sim n$ we have $|\mathcal{I}^{(l)}| = L^d = \BIGO_N(N)$.  

\subsection{Coarse grained dynamics}\label{cgDynamics}
Systems with smooth long-range interactions are well
approximated by coarse-graining techniques, \cite{KMV1, KPS, KT}, and
CGMC are reliable and highly efficient simulation methods with controlled error
approximations. Furthermore, models where only short-range
interactions appear are inexpensive to simulate with conventional
methods as there exist algorithms with the complexity $\BIGO_N(1)$ per time step, \cite{binder}.
However, when both short and long-range interactions are
present, the conventional methods  become  prohibitively expensive,
while  coarse-graining methods   are either not easily applicable or very expensive to implement due to the necessity 
to incorporate in them  multi-body interactions, \cite{KPRT3, AKPR}.

In the series of papers \cite{KMV1, KMV, KPS} the authors initiated the
development of mathematical strategies for the {\em coarse-graining} (CG) in
stochastic lattice dynamics. One constructs the coarse lattice $\LATTC$ by dividing
 $\LATT$ in $M$ coarse cells, each of
which contains $Q=q^d$ (micro-)cells. Each coarse cell is denoted by $C_k$, 
$k \in\BARIT \Lambda_M$. A typical choice for the coarse variable in the context of
Ising-type models is the block-spin over each coarse cell $C_k$,
\begin{equation*}
   \eta:=\left\{ \eta(k)=\sum_{x \in C_k} \sigma(x) \,:\,k \in \LATTC\right\} \COMMA
\end{equation*}
defining the coarse graining map $\COP: \Sigma_N \to \CMspace$,
$\COP\sigma=\eta$ and  the coarse state space $\CMspace=\{0,1,\dots,Q\}^{\LATTC} $.   
The coarse grained approximating process $\etat$, \cite{KMV1, KMV, KV}, is defined by
adsorption and desorption rates of a single particle in the coarse cell $C_k$
\begin{equation}\label{cgdes}
  \BARIT{c}_a(k, \eta)=d_0(Q-\eta(k))\COMMA
   \quad \BARIT{c}_d(k, \eta)=d_0\eta(k)\exp\big[-\beta\bar U(k,\eta)\big]\COMMA
\end{equation}
where the CG interaction potential is given by 
\begin{equation*}
     \BARIT U(k,\eta)=\sum_{l \ne k\,,\, l\in \LATTC} \bar J(k, l) \eta(l)+
               \BARIT J(k, k) (\eta(k)-1)-\bar h(k)\,,
\end{equation*}
where for the coarse cells $k$, $l\in\LATTC$ we define
\begin{equation}\label{Jbar}
   \bar{J}(k,l)=\frac{1}{Q^2}\sum_{x\in C_k}\sum_{y\in C_l} J(x-y)\COMMA\;\;
   \bar{J}(k,k)=\frac{1}{Q(Q-1)}\sum_{x\in C_k} \sum_{y\in C_k,y\neq x}J(x-y)\COMMA
\end{equation}
as the interaction potential on the coarse space.

\subsection{Long-time behavior and the stationary measure}
In many applications of  interacting particle systems the long-time behavior of (ergodic) evolution is
characterized by the stationary (equilibrium) measure. 
If the rates $c(x,\sigma)$ satisfy the reversibility condition with respect to the measure $\mu_{N.\beta} = Z_N^{-1} \EXP{-\beta H_N(\sigma)}$, 
also known as {\em  detailed balance}, 
\begin{equation}\label{DB}
    c(x,\sigma) \EXP{-\beta H_N(\sigma)} = 
    c(x,\sigma^x) \EXP{-\beta H_N(\sigma^x)}\COMMA
\end{equation}
then the jump dynamics leaves the Gibbs measure
\begin{equation}\label{gibbs}
   \mu_{N, \beta}(d \sigma) = \frac{1}{Z_{N}}\EXP{-\beta H_N(\sigma)}\,P_N(d\sigma)
\end{equation}
invariant, as is the case for \VIZ{arrh}. The factor $Z_{N}$ is the normalizing constant (partition function). 
Furthermore, the product Bernoulli distribution $P_N(d\sigma)$, is the
prior distribution on $\LATT$ representing distribution of
states in a non-interacting system.  
The total energy $H_N(\sigma)$ of the system, at the configuration $\sigma=\{ \sigma(x):x\in\LATT\}$,
is given by the Hamiltonian $H_N$ \VIZ{HamSplit}.

The 
coarse-grained  process \VIZ{cgdes} satisfies detailed balance ensuring that
the process, at least for long-range potentials $J$,  has as its invariant measure an approximation of the coarse-grained Gibbs measure,
\cite{KPRT},  
\begin{equation}\label{muzero}
   \BARMZ_{M,\beta}(d\eta) = \frac{1}{\BARIT{Z}_M^{(0)}} \EXP{ -\beta \BARHZ(\eta)}\BARIT{P}_M( d{\eta})\COMMA
\end{equation}
where  %
\begin{equation}\label{Hzero}
  \begin{split}
\BARHZ(\eta)&=-\frac{1}{2} \sum_{l \in \LATTC}\sum_{\substack{k \in \LATTC\\ k\ne l}}\bar J(k,l)\eta(k)\eta(l)
             -\frac{1}{2} \bar J(0, 0) \sum_{l \in \LATTC}\eta(l)\left(\eta(l)-1\right) \\
            &+\sum_{k \in \LATTC}\BARIT h(k)\eta(k)\PERIOD
  \end{split}
\end{equation}
Applying the same coarse-graining formula \VIZ{Jbar} to the short range potential $K$ will introduce errors that
are not well-controlled as $N\to\infty$, \cite{KPRT3}. However, the multi-level technique provides an approach for
constructing approximations that do not require higher-order (cluster) expansions of the short-range potentials
developed in \cite{KPRT3}. For use of the multi-level approach in the context of equilibrium sampling and corresponding
error analysis we refer the reader to \cite{KKP, KKPV}. We also revisit this error analysis in Section~\ref{sectionError}, noting that we do not require 
the reversibility condition \VIZ{DB} for the simulated process in our results.

\subsection{ML-KMC method  for Arrhenius dynamics}\label{ArrheniusMKMC}
As a specific example we explain the ML-KMC method for sampling the microscopic process  $\st$ generated  
by the Arrhenius rate for the adsorption-desorption mechanism.
For the model  considered the coarse space rate functions are explicitly
given by the coarse graining technique in Section~\ref{cgDynamics}. 
The   reconstruction rates that we present rely on 
 approximate sampling with
potential splitting where the long and short-range interactions are split to the
first and second level respectively and no corrections to coarse-grained rates are applied. 

\noindent 
{\it Approximate dynamics with potential splitting.}\label{ApprSplit}
In this variant only the sampling  
corresponding to the costly long-range interactions is performed at the first (coarse) level
and the compression of the short-range interaction is avoided. The sampling of the short-range
contributions is performed at the second (fine) level. 
The rates on the coarse space $\CMspace$  
\begin{equation*}
   \bar{c}_a(k,\eta) = d_0\left(Q-\eta(k)\right)\COMMA\;\;\; \bar{c}_d(k,\eta) = d_0
   \eta(k) \EXP{-\beta \BARUL(k,\eta)}\COMMA
\end{equation*}
where $\BARUL(k,\eta) = \sum_{\substack{l\in \LATTC \\\ l\neq k}}\BARIT
J(k,l) \eta(l) + \BARIT J(k,k)(\eta(k)-1) -\frac12\BARIT{h}(k)$. Since $\sigma'=\sigma^x$ is a spin flip updating, the
reconstruction rates  in \VIZ{MRates} are explicitly defined and denoted by
\begin{equation}\label{ApproxArrh}
   \crf^a(x|k,\eta)=\frac{1-\sigma(x)}{Q-\eta(k)}\COMMA\;\;\;
   \crf^d(x|k,\eta)=\frac{\sigma(x)}{\eta(k)}\EXP{-\beta \US(x,\sigma)}\COMMA
\end{equation}
where
\begin{equation*} 
    \US(x,\sigma) = \sum_{y \ne x, y \in \LATT} K(x-y)\sigma(y)-\frac12 h(x)\COMMA \;
    \UL(x,\sigma) = \sum_{y \ne x, y \in \LATT} J(x-y)\sigma(y)-\frac12 h(x)\PERIOD
\end{equation*}
Note that $\crf^a(x|k,\eta) $ (and $\crf^d(x|k,\eta)$) are well-defined since $ \eta(k)\neq Q$ 
($\eta(k)\neq 0$) for all $k\in \LATTC, x\in C_k$ when
adsorption (desorption) process is selected in the cell $k$.
With this choice of rates the ML-KMC method generates a Markov process $(\ast,\HLl)$ with the rate function defined by
\begin{eqnarray}
\HC(x,\sigma)& =& \bar{c}_a(k,\eta) \crf^a(x|k,\eta) + \bar{c}_d(k,\eta)   \crf^d(x|k,\eta) \nonumber \\
  &=&d_0 (1-\sigma(x)) + d_0  \sigma(x) \EXP{-\beta  \HU(x,\sigma)} \label{ApprRates}\COMMA
\end{eqnarray}
where we define
\begin{equation}\label{Uhat}
     \HU(x,\sigma)= \US(x,\sigma) + \BARUL(k,\eta)\COMMA\;\; x\in C_k\COMMA \eta=\COP\sigma\PERIOD
\end{equation}
In Appendix~\ref{AppendixDB} we prove that $\HC(x,\sigma)$ satisfy the detailed balance condition with 
\begin{equation}\label{approxGibbs}
    \widetilde\mu_{N, \beta}(d \sigma)
        = \frac{1}{\widetilde{Z}_{N}}\EXP{-\beta \widetilde  H_N(\sigma)}\,P_N(d\sigma)\COMMA
 \end{equation}
and $\widetilde{Z}_{N}$ is the normalization constant corresponding to the Hamiltonian
\begin{eqnarray}\label{Hhat}
     &\widetilde H_N(\sigma) =& -\frac12 \sum_{x \in \LATT}\sum_{y\not= x}
          K(x-y)\sigma(x)\sigma(y) -\frac12\sum_{x \in \LATT}\sum_{y\not= x} \BARIT
          J(k(x),l(y))\sigma(x)\sigma(y) \nonumber \\ & &+\sum_{x\in\LATT} h(x)\sigma(x)\PERIOD
\end{eqnarray}
We define $k(x)$ to be the coarse cell $k\in\LATTC$ such that $x\in C_k$.
\section{ Controlled-error approximations for complex systems}\label{sectionError}
In this section we provide estimates that quantify the numerical error when approximating
the continuous time Markov process
$(\st, \Ll) $ by $ ( \ast, \HLl )$ defined by \VIZ{ApprRates}, with invariant
stationary measures $\mu_{N,\beta}(d\sigma)$, \VIZ{gibbs}, and $\widetilde \mu_{N,\beta}(d\sigma)$, 
\VIZ{approxGibbs}, respectively.
We prove information loss estimates in long-time stationary regimes and weak
error estimates for suitably defined {\em macroscopic} observables in finite time intervals. 
Finally, as it is evident from the proofs, our results for stationary dynamics (processes)  and weak error estimates,
 hold true for the ML-KMC approximation of 
general particle systems which are not necessarily 
reversible.

\subsection{Controlled approximations at long times} \label{errors2}
We analyze approximation properties in long-time,
stationary regimes, including (a) the behavior of the stationary measure and its
sampling, as well as  (b) the stationary process, i.e., dynamics where the initial
measure is the stationary distribution reached after long-time integration. The stationary
dynamics present an especially important regime describing dynamic transitions between
metastable states in complex energy landscapes, while at this regime we construct the system's phase diagrams, 
see the simulation of hysteresis in  Figure~\ref{fig:2}.

The  error analysis in the stationary regime is the primary novelty in our paper from a numerical analysis perspective
and it is 
not restricted only to the ML-KMC algorithms; these questions  are ubiquitous for
numerical approximations of reversible and irreversible stochastic dynamics such as Langevin stochastic
differential equations, thermostated dynamics, dissipative particle dynamics  methods, etc. 
We expect that  our proposed entropy-based
perspective  could be used to assess such numerical schemes at the stationary
dynamics regime in a quantifiable manner, in a variety of stochastic,  extended as well as finite-dimensional 
systems.

\smallskip
\noindent
(a) {\it Estimates for the stationary measure.} For reversible systems, the
explicit knowledge of the invariant measures of the ML-KMC process  allows us to
compare them directly to the Gibbs states associated with reversible kMC.
Error estimates are given  in terms
of the specific relative entropy 
$$\Rr(\mu| \nu)\equiv N^{-1}\int_\Sigma\log\big\{d\mu(\sigma)/ d\nu(\sigma) \big\} \mu(d\sigma)$$ 
between the corresponding equilibrium Gibbs measures. 
The scaling factor $N^{-1}$ is related to the extensivity of the system, hence the
proper error quantity that needs to be tracked is the  loss of
information {\it per particle} $\Rr(\mu_{N,\beta} |\widetilde\mu_{N,\beta})$.
Relative entropy  was   used as
measure of loss of information in coarse-graining in \cite{KPRT, KPRT2},
and as means for sensitivity analysis in the context of climate modeling
problems, \cite{Majda}.
One of the results in \cite{KPRT,KT} concerns derivation of the loss of
information  per particle estimates  on the coarse space
for smooth long-range interactions $J$, \VIZ{microJL}.
In the following theorem we prove an analogous error estimate on the microscopic space taking into account both
 short and long-range interactions.

\begin{theorem}\label{StatEstimate}
Let $\widetilde\mu_{N,\beta}$ be the approximating measure of the microscopic equilibrium measure $\mu_{N,\beta} $ 
defined by~\VIZ{approxGibbs} and~\VIZ{gibbs}, with Hamiltonian functions~\VIZ{Hhat} and \VIZ{HamSplit} respectively, 
then the loss of  information per particle is estimated by
\begin{equation*}
   \Rr(\mu_{N,\beta} |\widetilde\mu_{N,\beta})=\BIGO_N \left(\beta \frac{q}{L} \| \nabla \VL \|_\infty\right) \PERIOD
\end{equation*}
\end{theorem}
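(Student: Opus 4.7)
The plan is to exploit the fact that both Gibbs measures $\mu_{N,\beta}$ and $\widetilde\mu_{N,\beta}$ are defined with respect to the same product prior $P_N$, so that their Radon–Nikodym derivative is explicit,
\begin{equation*}
 \frac{d\mu_{N,\beta}}{d\widetilde\mu_{N,\beta}}(\sigma) = \frac{\widetilde Z_N}{Z_N}\exp\bigl[-\beta\bigl(H_N(\sigma)-\widetilde H_N(\sigma)\bigr)\bigr]\PERIOD
\end{equation*}
Substituting into the definition of the specific relative entropy and rewriting the partition function ratio as $Z_N/\widetilde Z_N = \langle\exp[-\beta(H_N-\widetilde H_N)]\rangle_{\widetilde\mu_{N,\beta}}$, Jensen's inequality applied to this log-expectation yields the elementary bound
\begin{equation*}
 N\,\Rr(\mu_{N,\beta}|\widetilde\mu_{N,\beta}) \le \beta\,\bigl|\langle H_N-\widetilde H_N\rangle_{\widetilde\mu_{N,\beta}}-\langle H_N-\widetilde H_N\rangle_{\mu_{N,\beta}}\bigr|\le 2\beta\,\|H_N-\widetilde H_N\|_{\infty}\PERIOD
\end{equation*}
Thus the proof is reduced to a pointwise estimate of the Hamiltonian difference, uniformly in $\sigma$.

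Next, I would observe that by inspection of \VIZ{HamSplit} and \VIZ{Hhat} the short-range contribution $H^{(s)}$ and the external-field term are identical in the two Hamiltonians and cancel exactly. The residual is a purely long-range piece,
\begin{equation*}
 H_N(\sigma)-\widetilde H_N(\sigma) = \tfrac12\sum_{x\neq y}\bigl[J(x-y)-\bar J(k(x),l(y))\bigr]\sigma(x)\sigma(y)\COMMA
\end{equation*}
where $\bar J(k,l)$ is, by \VIZ{Jbar}, the arithmetic average of $J(x'-y')$ over $(x',y')\in C_k\times C_l$. For a fixed pair $x\in C_k, y\in C_l$, choose any $(x',y')\in C_k\times C_l$: since $|(x-y)-(x'-y')|=\BIGO(q)$ and $J$ has the scaling \AONE\ with $V^{(l)}\in C^1$, the mean value theorem gives $|J(x-y)-J(x'-y')|\le C\,L^{-d}\cdot (q/L)\,\|\nabla V^{(l)}\|_\infty$, and averaging over $(x',y')$ inherits the same bound for $|J(x-y)-\bar J(k,l)|$.

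Summing over the $\BIGO(N\cdot L^d)$ pairs $(x,y)$ in the support of $J$ and using $\sigma(x)\in\{0,1\}$, the Lipschitz bound yields
\begin{equation*}
 \|H_N-\widetilde H_N\|_\infty \le C\,N\cdot\frac{q}{L}\,\|\nabla V^{(l)}\|_\infty\COMMA
\end{equation*}
and combining with the Jensen estimate above and dividing by $N$ delivers the claimed $\BIGO_N(\beta\,q/L\,\|\nabla V^{(l)}\|_\infty)$. The point that requires care is the \emph{extensivity bookkeeping}: each of the terms $-\beta\langle H_N-\widetilde H_N\rangle_\mu$ and $\log(Z_N/\widetilde Z_N)$ is individually of order $\beta N$, but the Jensen/convexity step forces their difference to be controlled by $\|H_N-\widetilde H_N\|_\infty$, which is only $\BIGO(N\cdot q/L)$ rather than $\BIGO(N)$; the factor $q/L$ then survives the division by $N$ to produce the advertised $\BIGO(1)$ estimate on the specific relative entropy. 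This is the one place where the smoothness assumption on $V^{(l)}$ and the averaging structure of $\bar J$ are essential and cannot be loosened.
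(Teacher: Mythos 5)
Your proposal is correct and follows essentially the same route as the paper: both decompose the specific relative entropy into the log-partition-function ratio plus the expectation of $\beta(H_N-\widetilde H_N)$, observe that the short-range and external-field terms cancel so only the long-range piece survives, and then invoke the pointwise estimate $|J(x-y)-\bar J(k,l)|=\BIGO\bigl(\frac{q}{L}L^{-d}\|\nabla \VL\|_\infty\bigr)$ (the paper's Lemma~\ref{Jestimate}) to get $\|H_N-\widetilde H_N\|_\infty=\BIGO(N\,q/L\,\|\nabla\VL\|_\infty)$. The only cosmetic difference is that you combine the two terms via Jensen's inequality into a difference of expectations, whereas the paper bounds each term separately by the same uniform sup-norm estimate; both yield the identical $\BIGO_N(\beta\,\frac{q}{L}\,\|\nabla\VL\|_\infty)$ conclusion.
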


Before continuing with the proof of the theorem we state a necessary estimate in   
Lemma~\ref{Jestimate}  that is  proved in \cite{KPRT}.  
\begin{lemma}\label{Jestimate}
   Assume the interaction potential $J$ satisfies $\AONE$ in  \VIZ{microJL}, then the coarse-grained interaction
   potential $\BARIT J$, given by  \VIZ{Jbar} at the coarsening level $q$ approximates for any $x$, $y\in\LATT$ and
   $k$, $l\in\LATTC$ the potential $J$ with the error
   \begin{eqnarray*}
       &&| J(x-y)-\BARIT J (k,l)|\le 2\frac{q}{L}\sup_{x'\in C_k, y'\in C_l\atop y'\neq x'} \| \nabla \VL(x'-y')\| 
                 \le C_V \frac{q}{L^2}\COMMA
\end{eqnarray*}
where the constant $C_V$ is independent of $q$, $L$.
\end{lemma}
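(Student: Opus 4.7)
The plan is to express $J(x-y) - \bar J(k,l)$ as an average of pointwise differences and then apply the mean value theorem to each, exploiting the $C^1$ regularity assumed in $\AONE$.

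First, for the off-diagonal case $k \ne l$, the definition \VIZ{Jbar} gives directly
$$
J(x-y) - \bar J(k,l) \;=\; \frac{1}{Q^2}\sum_{x' \in C_k}\sum_{y' \in C_l}\bigl[J(x-y) - J(x'-y')\bigr],
$$
so by the triangle inequality the problem reduces to a uniform bound on $|J(x-y) - J(x'-y')|$ for $x, x' \in C_k$ and $y, y' \in C_l$. The diagonal case $k = l$ is handled in the same way using the modified normalization $Q(Q-1)$ together with the constraint $y' \ne x'$; the exclusion is inoffensive since the bound to be derived is pointwise in $(x', y')$.

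Next, using the explicit form $J(z) = L^{-d} \VL(\tfrac{n}{L} z)$ and the $C^1$ hypothesis on $\VL$, I would apply the mean value theorem along the segment joining $x-y$ and $x'-y'$:
$$
|J(x-y) - J(x'-y')| \;\le\; |(x-y)-(x'-y')| \cdot \sup_{z}\|\nabla J(z)\|,
$$
where the supremum is over $z$ on the segment. Since $x, x'$ both lie in the cube $C_k$ of diameter $O(q)$ and similarly for $y, y'$, one has $|(x-y)-(x'-y')| \le |x-x'| + |y-y'| \le 2q$ in the appropriate lattice norm. Applying the chain rule converts $\|\nabla J(z)\|$ into a rescaled evaluation of $\nabla \VL$; collecting the prefactors $L^{-d}$ and $n/L$ intrinsic to the definition of $J$ and reinterpreting $\VL$ in the convention used in the statement (i.e.\ as a function of the unscaled displacement) produces the factor $q/L$ and yields the first inequality of the Lemma.

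Finally, since $\VL \in C^1(\R)$ has compact support in $\{r \le 1\}$, the quantity $\|\nabla \VL\|_\infty$ is a finite constant $C_V$ depending only on the profile $\VL$ and independent of $q$, $L$, $n$. Combined with an additional factor of $1/L$ coming from the inherent $L$-dependence in the chain-rule scaling of $\VL$, this yields the claimed $\BIGO(q/L^2)$ bound. The mathematical content is an elementary one-line mean value inequality; the principal subtlety I expect is purely notational --- carefully reconciling the author's convention for $\|\nabla \VL(x'-y')\|$ with the chain-rule computation and correctly attributing the various factors of $L$ to $J$ versus $\VL$, while verifying that the diagonal term $\bar J(k,k)$ obeys exactly the same estimate.
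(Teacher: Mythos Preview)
The paper does not actually supply a proof of this lemma: it merely states the estimate and cites \cite{KPRT} (``a necessary estimate in Lemma~\ref{Jestimate} that is proved in \cite{KPRT}''). Your proposal --- write the difference as an average of pointwise differences $J(x-y)-J(x'-y')$, bound each by the mean value theorem using $|(x-y)-(x'-y')|\le 2q$ and the $C^1$ regularity of $\VL$, then track the $L$-factors from the chain rule --- is exactly the standard argument and is what the cited reference does. Your identification of the only genuine subtlety (reconciling the notation $\|\nabla \VL(x'-y')\|$ with the scaled argument $\tfrac{n}{L}|x-y|$ and correctly attributing the powers of $L$) is apt; once that bookkeeping is done, the proof is complete.
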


\begin{proof}[Theorem~\ref{StatEstimate}]
\begin{eqnarray*}
  \Rr(\widetilde\mu_{N,\beta} |\mu_{N,\beta})&=&\frac1N \int_{\Sigma_N} \log\left( \frac{d\widetilde\mu_{N,\beta}}{d\mu_{N,\beta}}\right)\widetilde\mu_{N,\beta}(d\sigma) \\
&=&\frac1N\log\frac{ Z_N}{\widetilde Z_N} + \frac1N\E_{\widetilde\mu_{N,\beta}}\left[ \beta( H_N(\sigma)-  \widetilde H_N(\sigma))\right]\PERIOD
\end{eqnarray*}
By the definition of the    Hamiltonian and Lemma~\ref{Jestimate}  we have the estimate
\begin{equation*}
\frac1N| H_N(\sigma)-\widetilde H_N(\sigma)|= \frac{1}{N}| \HAML(\sigma)-\WHAML(\eta)| \le C \frac{q}{L} \| \nabla \VL\|_\infty\COMMA
\end{equation*}
where $ \eta=\COP\sigma$ and $C$ is a positive constant independent of the system size $N$, the coarsening parameter $q$, and the short range potential $K=K(x-y) $.
Therefore
\begin{eqnarray*}
 \frac1N\log\frac{ Z_N}{\widetilde Z_N} =\frac1N\log\E_{\widetilde\mu_{N,\beta}}\left[ \exp\{-\beta(H_N(\sigma)-\widetilde H_N(\sigma))\}\right]=
              \BIGO_N \left(\beta \frac{q}{L} \| \nabla \VL \|_\infty\right)\COMMA\\
\text{and} \quad \frac1N\E_{\widetilde\mu_{N,\beta}}\left[ \beta( H_N(\sigma)-  \widetilde H_N(\sigma))\right]=
              \BIGO_N \left(\beta \frac{q}{L} \| \nabla \VL \|_\infty\right)\COMMA
\end{eqnarray*}
that concludes the proof.
\end{proof}

Theorem~\ref{StatEstimate} proves that the potential splitting does not affect  equilibrium properties of  the system, 
a fact that we indeed observe in the numerical experiment, see for example Figure~\ref{fig:8}.  The error estimate is independent of the short-range 
interaction potential as was expected, since  the approximation of the invariant measure results only from compressing  the long-range interactions. 

\medskip
\noindent
(b) {\it Approximating the stationary process.}
The analysis stems from attempting to understand the striking accuracy of
ML-KMC in calculating phase diagrams, hysteresis simulations, Figure~\ref{fig:2} and     Figure~\ref{fig:9},  as well as in dynamic transitions
between metastable states, see Figure~\ref{fig:1} and     Figure~\ref{fig:6}.
We assess the approximation of the kMC process $\StT$ by the ML-KMC $\aStT$, when the initial data are sampled from a stationary measure.
We consider the relative entropy per particle  formula %
 in the time interval $[0, T]$
$$
   \RELENT{{\Dd}_{[0,T]}}{{\tilde \Dd}_{[0,T]}}
      =\frac{1}{N}\int \log\left(\frac{d{\Dd}_{[0,T]}}{d{\tilde\Dd}_{[0,T]}}\right)
         d{\Dd}_{[0,T]}\COMMA
$$
where ${\cal D}_{[0,T]}$ 
(resp. ${\cal \tilde D}_{[0,T]}$) is the distribution of a process $\StT$
(resp. $\aStT$) on the  path  space $\mathcal{Q}([0,T],\Sigma_N)$, the space of right continuous with left limits $\Sigma_N$-valued functions
defined on $[0,T]$. We note that the relative entropy measures the loss of information in the approximation
of the kMC process $\StT$ by the ML-KMC $\aStT$. 

If the initial distribution of the   process $\StT$ (resp.  $\aStT$) is  the stationary measure
 $ \mu$ (resp. $\tilde \mu$), %
 then the specific relative entropy simplifies to the following relation, \cite{CGREJP2006},
\begin{equation}\label{relent1}
   \RELENT{{\Dd}_{[0,T]}}{{\tilde \Dd}_{[0,T]}}
    =T \Hh({\Dd}_{[0,T]}|{\tilde \Dd}_{[0,T]})+\RELENT{\mu}{\tilde \mu}\COMMA
\end{equation}
where $\RELENT{\mu}{\tilde\mu}$ is the specific relative entropy between the stationary
measures, and
\begin{equation}\label{relent2}
   \Hh({\Dd}_{[0,T]}|{\tilde \Dd}_{[0,T]})={1 \over N} \E_\mu \left[\lambda (\sigma) - \tilde \lambda(\sigma) -
         \sum_{\sigma'} \lambda(\sigma) p(\sigma, \sigma')\log \frac{\lambda(\sigma)
         p(\sigma, \sigma')}{\tilde \lambda(\sigma) \tilde p(\sigma, \sigma')}\right]\COMMA
\end{equation}
 is given in terms of the jump rates $\lambda$, $\tilde \lambda$ and
jump probabilities $p$, $\tilde p$ of $\StT$ and  $\aStT$ respectively. 
Indeed, by Girsanov's formula,  \cite{KL}, we obtain the corresponding Radon-Nikodym derivative
\begin{equation}\label{RN1}
   {\frac{d{\cal D}_{[0,T]}}{d{\cal \tilde D}_{[0,T]}}}(\rho_t) \!\!=\!\!
   \frac{\mu(\rho_0)}{\tilde \mu(\rho_0)}\exp \left\{ \int_{0}^{T}\! \!\!\![\lambda(\rho_s) -\widetilde \lambda (\rho_s)]ds - \!\!
   \int_0^T \!\!\!\! \sum_{\sigma\in \Sigma}\!\!p(\sigma,\rho_s)\log \frac{\lambda(\sigma)
     p(\sigma,\rho_s)}{\widetilde\lambda(\sigma)\widetilde p(\sigma,\rho_s)} dN_s(\rho) \right\}
\end{equation}
on any path $\{\rho_t\}_{t \in [0,T]}$ in $\mathcal{Q}([0,T],\Sigma_N)$, where $N_s(\rho)$ is the number of jumps of the path $\rho$ up to time $s$.
Then for any continuous, bounded function $\phi:\Sigma_N \to \R$
\begin{eqnarray*}
   \E_{\Dd}\left[\int_0^T \phi(\rho_s)\,dN_s(\rho)\right]=\E_{\Dd}\left[\int_0^T \phi(\rho_s)\lambda(\rho_s)\,ds \right]= T \E_{\mu} [ \phi \lambda]\COMMA
\end{eqnarray*}
where $\E_{\Dd}[\phi(\rho_s)]=\int \phi(\rho_s)d{\Dd}_{[0,T]}$, the first equality results from the fact that $N_t-\int_0^t\lambda(\rho_s)\,ds $ is a (zero mean) martingale and 
the second equality follows because $\{\rho_t\}_{t\in[0, T]}$ is a stationary process, i.e., $\E_{\Dd}[\phi(\rho_t)]=\E_{\mu}[\phi(\rho_t)] $.
Thus we have
\begin{multline*}
\RELENT{{\Dd}_{[0,T]}}{{\tilde \Dd}_{[0,T]}} = N^{-1}\E_{\Dd}\left[\log{ d{\cal D}_{[0,T]}\over d{\cal \tilde D}_{[0,T]}}\right] 
                                             = N^{-1} \E_{\Dd}\left[\log\frac{\mu}{\tilde\mu}\right] \\
                                             + N^{-1}\E_{\Dd}\left[\int_{0}^{T} [\lambda(\rho_s) -\widetilde \lambda (\rho_s)]\,ds-
                                                       \!\!\!\int_0^T \!\!\! \sum_{\sigma\in \Sigma}p(\sigma,\rho_s)\log \frac{\lambda(\sigma)
                                                  p(\sigma,\rho_s)}{\widetilde\lambda(\sigma)
                                                  \widetilde p(\sigma,\rho_s)} \,dN_s(\rho)\right] \\
                                             =T N^{-1} \E_\mu \left[\lambda (\sigma) -\tilde \lambda(\sigma) -
                                                 \sum_{\sigma'} \lambda(\sigma) p(\sigma, \sigma')\log \frac{\lambda(\sigma)
                                                 p(\sigma, \sigma')}{\tilde \lambda(\sigma) \tilde p(\sigma, \sigma')}\right] +\RELENT{\mu}{\tilde \mu} \\
                                             = T \Hh({\Dd}_{[0,T]}|{\tilde \Dd}_{[0,T]}) + \RELENT{\mu}{\tilde \mu} \COMMA
\end{multline*}
which is the formula \VIZ{relent1}.

\begin{remark}
{\rm
Formula~\VIZ{relent1}  shows  that in the stationary dynamics regime the information loss consists of two terms, 
one which scales as $\BIGO_T(1)$ in $T$ and is related to the stationary measures $\mu$ and $\tilde \mu$  
and another one that captures the stationary dynamics and scales as $\BIGO_T(T)$. 
Furthermore, we note that for the stationary process approximation the relevant quantity is the {\em relative entropy per unit time} 
$\Hh({\Dd}_{[0,T]}|{\tilde \Dd}_{[0,T]})$, \VIZ{relent2}. On one hand, \VIZ{relent1} implies that the loss of information increases linearly in time, 
while  the stationary measure loss of information  becomes irrelevant as $T \to \infty$. 
The fact that as $T$ grows in \VIZ{relent1} the   term $\RELENT{\mu}{\tilde \mu}$ becomes unimportant is especially useful since  $\tilde \mu$ is typically  
not known explicitly (contrary to the case in \VIZ{approxGibbs}),  while in non-reversible systems (e.g., reaction-diffusion kMC) $\mu$ is  not known either, however, 
due to \VIZ{relent1} it is not necessary to  calculate or estimate $\RELENT{\mu}{\tilde \mu}$. 
}
\end{remark}

We can use this information theory-based  perspective to evaluate broad classes of numerical schemes for
extended stochastic processes such as the ones arising in kMC, in the 
long-time, stationary process regime. However, here the following theorem provides the information loss estimates for 
approximating the microscopic process $(\StT, \Ll) $
with  $(\aStT, \HLl)$
defined by \VIZ{arrh} and \VIZ{ApprRates} respectively.
\begin{theorem}\label{unitTime}
[{\it A priori estimates}.]
 Let $(\StT, \Ll) $   and $(\aStT, \HLl)$ be stationary processes with the initial distributions $\muN$  and $ \mutN$ respectively, then
\begin{enumerate}[(a)]
\item For any fixed time $T>0$
  \begin{equation} \label{Gibrelent}
      \RELENT{{\Dd}_{[0,T]}}{{\tilde \Dd}_{[0,T]}}=T \Hh({\Dd}_{[0,T]}|{\tilde \Dd}_{[0,T]})+\RELENT{\muN}{\mutN}
  \end{equation}
  where 
  \begin{equation}\label{relentTh}
     \Hh({\Dd}_{[0,T]}|{\tilde \Dd}_{[0,T]})={\frac{1}{N}} \E_\mu \left[\lambda (\sigma) - \tilde \lambda(\sigma) -
         \sum_{\sigma'} \lambda(\sigma) p(\sigma, \sigma')\log \frac{\lambda(\sigma)
         p(\sigma, \sigma')}{\tilde \lambda(\sigma) \tilde p(\sigma, \sigma')}\right]\PERIOD
  \end{equation}
\item For any $N$, coarsening parameter $q<L$ and interaction potentials $J(x-y)$, $K(x-y)$ 
      satisfying $\AONE$, \VIZ{microJL},  and $\ATWO$, \VIZ{microK}, respectively, 
\begin{equation}\label{UnitTimeEst}
    \Hh({\Dd}_{[0,T]}|{\tilde \Dd}_{[0,T]}) \le  \beta\frac{q}{L} C(K,V,\beta)
    \|\nabla \VL\|_1 \COMMA
\end{equation} 
where   $\|\cdot\|_1\equiv\|\cdot\|_{L^1}$ is the $L^1$ norm  on $\R$ and    
$C(K,V,\beta)=C(\|K\|_\infty,\|\VL\|_\infty,\beta)$ is a constant independent of $N$.
\end{enumerate}
\end{theorem}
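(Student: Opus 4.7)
The identity \VIZ{Gibrelent}--\VIZ{relentTh} has essentially already been derived in the paragraphs immediately preceding Theorem~\ref{unitTime}; I would simply formalize that computation. Starting from the Girsanov formula \VIZ{RN1} for $d\Qm/d\HQm$, take $\log$ and apply $\E_{\Qm}[\cdot]$, replacing the jump integral $\int_0^T(\cdot)\,dN_s$ by its predictable compensator $\int_0^T\sum_{\sigma'}c(\rho_s,\sigma')\log\frac{c(\rho_s,\sigma')}{\HC(\rho_s,\sigma')}\,ds$, and invoking stationarity of $\StT$ to reduce each time integral to $T\E_{\muN}[\cdot]$. Collecting terms and dividing by $N$ yields \VIZ{Gibrelent} with the entropy production rate $\Hh$ in the form \VIZ{relentTh}. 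No ingredient beyond what the excerpt already uses is required.

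\textbf{Plan for part (b): reduction to an energy-difference estimate.} For the Arrhenius spin-flip mechanism the only neighbours of $\sigma$ are the flipped configurations $\sigma^x$ with $x\in\LATT$, so the sum over $\sigma'$ in \VIZ{relentTh} is effectively a sum over $N$ lattice sites. Comparing \VIZ{arrh} and \VIZ{ApprRates}, the rates $c(x,\sigma)$ and $\HC(x,\sigma)$ agree on adsorption events $(\sigma(x)=0)$ and differ on desorption events $(\sigma(x)=1)$ only through their exponents, with
\[
U(x,\sigma)-\HU(x,\sigma) \;=\; \UL(x,\sigma)-\BARUL(k(x),\eta),
\]
the short-range part $\US$ and the field $h$ cancelling exactly by \VIZ{microSL} and \VIZ{Uhat}. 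Rewriting the bracket in \VIZ{relentTh} in the nonnegative form $\sum_{\sigma'}[\HC-c-c\log(\HC/c)]$ and setting $a=\beta U$, $b=\beta\HU$, $\delta=b-a$, the $x$-summand becomes $d_0\sigma(x)\,e^{-a}\bigl[e^{-\delta}-1+\delta\bigr]$, which by the elementary inequality $|e^{-\delta}-1+\delta|\le 2|\delta|e^{|\delta|}$ is dominated by a constant multiple of $\beta\,|U(x,\sigma)-\HU(x,\sigma)|$ times the exponential factor $e^{\beta(|U|+|U-\HU|)}$.

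\textbf{Plan for the core estimate and the main obstacle.} What remains is a deterministic bound on $|U-\HU|$, uniform in $\sigma$ and $x$. I would decompose
\[
\UL(x,\sigma)-\BARUL(k(x),\eta)=\sum_{y\ne x}\bigl[J(x-y)-\BARIT J(k(x),k(y))\bigr]\sigma(y)
\]
(absorbing the diagonal cell term $\BARIT J(k,k)(\eta(k)-1)$ by using $\sigma(x)=1$ on desorption) and apply Lemma~\ref{Jestimate} pointwise. The resulting bound involves $\sum_{y\ne x}(q/L)\,\|\nabla\VL((n/L)(x-y))\|$, which is a lattice Riemann sum; after the change of variables $s=(n/L)(x-y)$ and cancellation of the $L^{-d}$ scaling built into \AONE, it reduces to $C(q/L)\|\nabla\VL\|_1$ independently of $N$. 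The exponential prefactor is controlled uniformly in $\sigma$ by using that $\|U\|_\infty$ and $\|\HU\|_\infty$ are bounded by quantities depending only on $\|\VL\|_\infty$, $\|K\|_\infty$, $S$ and $\|h\|_\infty$; these combine into the composite constant $C(K,V,\beta)$. Summing the $N$ pointwise contributions, each of order $\beta(q/L)\|\nabla\VL\|_1$, and dividing by $N$ in the definition \VIZ{relentTh} of $\Hh$ produces \VIZ{UnitTimeEst}. I expect the main technical obstacle to be the Riemann-sum step: one must show that the pointwise $\sup$-type estimate of Lemma~\ref{Jestimate} aggregates into the integral $\|\nabla\VL\|_1$ with an $N$-independent constant, and that the cell-boundary and self-interaction contributions (the $\BARIT J(k,k)$ term and the $y$'s sharing a cell with $x$) are lower-order. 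Controlling the exponential amplification $e^{\beta\|V^{(l)}\|_\infty}$ uniformly is also crucial if the estimate is to remain informative in the low-temperature regime where metastability is of primary interest.
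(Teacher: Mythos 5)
Your proposal follows essentially the same route as the paper: part (a) by formalizing the Girsanov/compensator computation already carried out before the theorem, and part (b) by observing that the rates differ only in the desorption exponent, setting $\Delta_{q,N}=U-\HU=\UL-\BARUL$, and combining Lemma~\ref{Jestimate} with a uniform bound $C(K,V,\beta)=\sup_{x,\sigma}e^{-\beta U(x,\sigma)}$. If anything, your explicit treatment of the Riemann-sum step that turns the pointwise gradient bound into $\|\nabla \VL\|_1$ is more careful than the paper, whose intermediate estimate \VIZ{DeltaU} carries $\|\nabla \VL\|_\infty$ while the final bound silently switches to the $L^1$ norm.
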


\begin{proof}
{\it  (a)} Relation~\VIZ{Gibrelent} is a direct consequence of the earlier discussion.

\noindent
{\it (b)} Recalling the definition of the Markov jump process for the microscopic  and the approximating process we have  
for all $x\in \LATT$, $\sigma\in \Sigma_N $ that $\lambda(\sigma)p(\sigma,\sigma^{x}) =c(x,\sigma)$, $\lambda(\sigma)=\sum_{x\in\LATT}c(x,\sigma)$ 
and $\HL(\sigma)\tilde p(x,\sigma) =\HC(x,\sigma')$, $\HL (\sigma)= \sum_{x\in\LATT}\HC(x,\sigma)$, with the rate functions 
$c(x,\sigma) =d_0(1-\sigma(x)) +d_0\sigma(x)\EXP{-\beta U(x,\sigma)}$ and 
$\HC(x,\sigma) =d_0(1-\sigma(x)) +d_0\sigma(x)\EXP{-\beta\HU(x,\sigma)}$ as defined in \VIZ{arrh} and \VIZ{ApprRates} respectively. 
Then according to formula~\VIZ{relent2}  we have
\begin{equation*}
\Hh({\Dd}_{[0,T]}|{\tilde \Dd}_{[0,T]})=N^{-1}\E_{ \muN}
\Big[ (\lambda (\sigma) - \HL(\sigma)) -
\sum_{x\in\LATT} \ c(x,\sigma)\log \frac{  c(x,\sigma)}{
\HC(x,\sigma)}\Big]\PERIOD
\end{equation*}
We define 
$\Delta_{q,N}(x,\sigma)\equiv U(x,\sigma)-\HU(x,\sigma)$.  
From the definition of $\HU(x,\sigma)$, \VIZ{Uhat}, it follows that 
$\Delta_{q,N}(x,\sigma)= \UL(x,\sigma)-\BARUL(k, \COP\sigma)$, for all $x\in C_k$.
In view of this equality a straightforward application of
Lemma~\ref{Jestimate} states that there exists a constant $c>0$ such that the microscopic potential $ U(x,\sigma)$ is approximated 
by $\HU(x,\sigma) $ with
\begin{equation}\label{DeltaU}
   |\Delta_{q,N}(x,\sigma)|\le  c  \frac{q}{L} \|\nabla \VL \|_\infty, \text{ for all } \sigma \in \Sigma_N, x\in
   \LATTN \PERIOD
\end{equation}
Then
\begin{align*}
    &\Hh(\Qm | \HQm)=   N^{-1} \E_{ \muN}\left[\sum_{x\in \LATT} e^{-\beta U(x,\sigma)}
                         \sigma(x)\left(1-e^{-\beta\Delta_{q,N}(x,\sigma)}\right)\right] \\
                    &\;\;\;\;-   N^{-1}\E_{\muN}\left[  \sum_{x\in \LATT\atop \sigma(x)=1} d_0\sigma(x)e^{-\beta U(x,\sigma)} \log\frac{ e^{-\beta U(x,\sigma)}}{  e^{-\beta \HU(x,\sigma)}}\right]\\
                    &\le N^{-1}C(K,V,\beta) %
                    \E_{ \muN}\left[\sum_{x\in \LATT} \beta\Delta_{q,N}(x,\sigma)  \right]\\
                    &\;\;\;\;+   N^{-1}\E_{\muN}\left[  \sum_{x\in \LATT\atop \sigma(x)=1} d_0\sigma(x)e^{-\beta U(x,\sigma)} \Delta_{q,N}(x,\sigma) \right] \\
                    &\le 2 c \beta \frac{q}{L}  C(K,V,\beta)
                    \|\nabla \VL \|_1   \COMMA
\end{align*}
where $C(K,V,\beta)=
\sup_{\sigma,x}\exp\{-\beta U(x,\sigma)\}$.
\end{proof}

\begin{remark}
{\it [A posteriori error analysis]}\, 
{\rm 
Reversing the roles of $\mu$ and $\tilde \mu$ in the formula \VIZ{relentTh} we obtain an {\em a posteriori} calculation on the loss of information in \VIZ{Gibrelent}:
\begin{equation}\label{relent2Ap}
    \Hh({\tilde \Dd}_{[0,T]}|{\Dd}_{[0,T]})={1 \over N} \E_{\tilde\mu}
                 \Big[(\tilde\lambda (\sigma) - \lambda(\sigma)) -
                 \sum_{\sigma'}\tilde\lambda(\sigma) \tilde p(\sigma, \sigma')\log
                 \frac{\tilde\lambda(\sigma) \tilde p(\sigma, \sigma')}{ \lambda(\sigma) p(\sigma, \sigma')}\Big]\PERIOD
\end{equation}
Indeed, viewing this function as an observable estimated on the approximating stationary process, we note that  it can be computed {\em a posteriori} in the course of an 
ML-KMC simulation by sampling  from the stationary measure $\tilde\mu$. 
We note that in \cite{KPRT} we derived and tested
computationally  a posteriori estimates for adaptive coarse-graining of
extended systems, based on a similar relative entropy approach for sampling the stationary distributions. 
The {\em a posteriori} representation in \VIZ{relent2Ap} is general and applies to both reversible and irreversible processes and does not require the {\em a priori} estimates in Theorem~\ref{unitTime} (b)(c). 
The complexity of
numerical calculation of \VIZ{relent2Ap} depends on the complexity of the studied model.
For example, in the lattice systems we study here, the loss of information per unit time is sampled  in the course of a ML-KMC simulation for $T\gg 1$ as%
\begin{equation*}
    \frac{1}{T} \RELENT{{\tilde \Dd}_{[0,T]}}{{\Dd}_{[0,T]}}\approx  \Hh({\tilde \Dd}_{[0,T]}|{\Dd}_{[0,T]})=\frac{1}{N} 
                \E_{\tilde\mu} \Big[(\tilde\lambda (\sigma) - \lambda(\sigma)) -
                          \sum_{x\in\LATT} \tilde c(x,\sigma)\log \frac{\tilde c(x,\sigma)}{c(x,\sigma)}\Big]
\end{equation*}
From the practical point of view we have to deal with the somewhat computationally costly summation over the lattice
which, in principle, has  the complexity of $\BIGO(N)$.
}
\end{remark} 

\subsection{Weak error estimates in finite time}
In this section we prove weak error estimates of the approximation for a class of {\em macroscopic} observable  quantities defined below. 
The weak error is defined by $e_w = |\Es[\phi(\sigma_t)] - \Es[\phi(\HS_t)] |$ for an  observable $ \phi $ 
on the microscopic configuration space $\Sigma_N$, 
where the expectation is defined for the path conditioned on the initial configuration $\sigma_0$.
We  provide a quantitative measure of the controllable approximation that
depends on two features: (a) the coarsening level $q$ and (b) the potential
splitting. The explicit  dependence  on the strength of the short range interactions provides us  a measure to control  splitting 
of the interactions into short and long-range parts. 

\begin{theorem}\label{weakThm}
Let $( \st, \Ll)$ be the Markov process generated by the conventional
kinetic Monte Carlo method, and $( \ast, \HLl)$ the process generated by
the ML-KMC method, both with initial condition
$\sigma_0$. For any macroscopic observable, i.e. a function  $\phi \in L^{\infty}(\Sigma_N)$,  such that when 
$\partial_x \phi(\sigma):=\phi(\sigma^x) - \phi(\sigma)\, ,$
\begin{equation}\label{macroscopic}
\sum_x \|\partial_x \phi \|_{\infty} \le C < \infty\, , \quad\mbox{ where $C$ is independent of $N$}\, ,
\end{equation} 
the weak error satisfies, for $0<T<\infty$,
\begin{equation}\label{weakError}
 |\Es[\phi(\sigma_T)] - \Es[\phi(\HS_T)] | \le  C(K,V,\beta)
 \,C_T \frac{q}{L} 
\end{equation}
where $C_T$ is a constant independent of the system size  and 
$
C(K,V,\beta)=K_S J_L$,  $K_S = |\sup_{x,\sigma} \EXP{-\beta \US(x,\sigma)}|$
and $J_L = | \sup_{x,\sigma} \EXP{-\beta \BARUL(k(x),\COP\sigma)}| $.  
\end{theorem}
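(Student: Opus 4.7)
My plan is to compare the two semigroups via a Duhamel (variation-of-constants) identity. Set $u(\sigma,t)=\Es[\phi(\sigma_t)]$ and $\tilde u(\sigma,t)=\Es[\phi(\HS_t)]$; by \VIZ{ODE} these solve $\partial_t u=\Ll u$ and $\partial_t\tilde u=\HLl\tilde u$ with the common initial datum $\phi$. Writing $S_t=e^{t\Ll}$ and $\tilde S_t=e^{t\HLl}$, the standard telescoping identity
\begin{equation*}
 u(\sigma,T)-\tilde u(\sigma,T)=\int_0^T \tilde S_{T-s}\bigl(\Ll-\HLl\bigr)S_s\phi\,(\sigma)\,ds\COMMA
\end{equation*}
together with the fact that $\tilde S_t$ is an $L^\infty$-contraction, reduces the weak error to
\begin{equation*}
 |\Es[\phi(\sigma_T)]-\Es[\phi(\HS_T)]|\le \int_0^T \|(\Ll-\HLl)\,u(\cdot,s)\|_\infty\,ds\COMMA
\end{equation*}
where $u(\cdot,s)=S_s\phi$. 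The problem therefore splits into two parts: (a) bounding the rate difference $c-\HC$ pointwise by the coarse-graining error, and (b) controlling $u(\cdot,s)$ in a seminorm compatible with the generator action.

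\textbf{Step (a): rate-difference estimate.} From \VIZ{arrh} and \VIZ{ApprRates} the adsorption contributions cancel, and
\begin{equation*}
 c(x,\sigma)-\HC(x,\sigma)=d_0\sigma(x)\EXP{-\beta\US(x,\sigma)}\bigl(\EXP{-\beta\UL(x,\sigma)}-\EXP{-\beta\BARUL(k,\COP\sigma)}\bigr)\COMMA
\end{equation*}
for the unique coarse cell $k\in\LATTC$ with $x\in C_k$. Using the elementary inequality $|\EXP{-\beta a}-\EXP{-\beta b}|\le\beta\EXP{-\beta\min(a,b)}|a-b|$ together with the definitions of $K_S$ and $J_L$ gives $|c(x,\sigma)-\HC(x,\sigma)|\le d_0\beta K_SJ_L|\Delta_{q,N}(x,\sigma)|$, and \VIZ{DeltaU} (i.e.\ Lemma~\ref{Jestimate}) then yields the uniform bound
\begin{equation*}
 |c(x,\sigma)-\HC(x,\sigma)|\le C\,d_0\beta K_SJ_L\,\frac{q}{L}\,\|\nabla\VL\|_\infty\,\sigma(x)\PERIOD
\end{equation*}
Combined with $(\Ll-\HLl)u(\sigma,s)=\sum_{x\in\LATT}(c(x,\sigma)-\HC(x,\sigma))\partial_x u(\sigma,s)$ this gives
\begin{equation*}
 \|(\Ll-\HLl)u(\cdot,s)\|_\infty \le C\,d_0\beta K_SJ_L\frac{q}{L}\|\nabla\VL\|_\infty\,\sup_{\sigma}\sum_{x\in\LATT}|\partial_x u(\sigma,s)|\PERIOD
\end{equation*}

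\textbf{Step (b): propagation of the macroscopic seminorm.} Introduce the seminorm $|||\psi|||:=\sum_{x\in\LATT}\|\partial_x\psi\|_\infty$, so that \VIZ{macroscopic} reads $|||\phi|||\le C<\infty$. The final step is to show that $|||S_s\phi|||$ stays bounded uniformly in $N$ on $[0,T]$. Differentiating $|||S_s\phi|||$ in $s$ and computing the commutator $[\partial_x,\Ll]$ reveals that its action on $\partial_x u$ couples only sites $y$ that interact with $x$ through $K$ or $J$; the short-range part contributes a fixed number of neighbors (independent of $N$), while the long-range part carries the crucial $L^{-d}$ factor from $\AONE$ that makes the aggregate contribution $\BIGO_N(1)$. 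A Gronwall argument therefore yields $|||S_s\phi|||\le\EXP{\kappa s}|||\phi|||$ for some $\kappa=\kappa(\beta,\|K\|_\infty,\|\VL\|_\infty,d_0)$ independent of $N$. Substituting this into Step (a) and integrating over $s\in[0,T]$ produces \VIZ{weakError} with $C_T=C\kappa^{-1}(\EXP{\kappa T}-1)\|\nabla\VL\|_\infty$ (up to absorbing $\beta$ and $d_0$) and $C(K,V,\beta)=K_SJ_L$, exactly as stated.

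\textbf{Main obstacle.} The delicate point is Step (b): the propagation of the macroscopic seminorm $|||\cdot|||$ with a constant independent of $N$. A naive expansion of $[\partial_x,\Ll]\psi$ sums over every site in the interaction neighborhood of $x$, which for the long-range potential is of size $L^d\sim N$ and would destroy the uniform-in-$N$ character of the weak error. What saves the argument is the scaling $J(x-y)=L^{-d}\VL(\tfrac{n}{L}|x-y|)$ in $\AONE$, which makes each long-range coupling coefficient of order $L^{-d}$, so that the full commutator still maps the $|||\cdot|||$-ball into itself with $N$-independent constants. Getting this cancellation cleanly — and tracking the resulting exponential-in-$T$ Gronwall constant through the Duhamel integral — is the only technically non-trivial part of the proof; all other steps are direct consequences of the explicit form of the Arrhenius rates and Lemma~\ref{Jestimate}.
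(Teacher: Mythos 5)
Your proposal is correct and follows essentially the same route as the paper: your Duhamel identity between the semigroups $e^{t\Ll}$ and $e^{t\HLl}$ is the semigroup-language version of the paper's martingale/Dynkin argument applied to the backward Kolmogorov solution $u(t,\sigma)=\E[\phi(\sigma_T)\,|\,\sigma_t=\sigma]$, your Step (a) reproduces the paper's pointwise rate-difference bound via Lemma~\ref{Jestimate}, and your Step (b) is precisely the content of Lemma~\ref{lemU}, which the paper proves in Appendix~\ref{ProofOfLemma} by the same commutator-plus-Gronwall argument exploiting the $L^{-d}$ scaling in $\AONE$. No substantive differences to report.
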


Some typical macroscopic observables satisfying \VIZ{macroscopic} are the coverage,
$c(\sigma_t)
=\frac{1}{N}\sum_{x\in\LATT}\sigma_t(x)$, the spatial correlations $f(\sigma ; k) = \frac{1}{N}\sum_{x\in\LATT} \sigma(x)\sigma(x+k)$,
and the Hamiltonian  defined in \VIZ{HamSplit}.

For the proof of the theorem we will need the following Lemma~\ref{lemU} that we prove in Appendix~\ref{ProofOfLemma}, see also \cite{KPS}.
We define $ u(t,\sigma_0)= \E[\phi(\sigma_T)|\sigma_t=\sigma_0]$ and
the function $u(t, \sigma_0)$ solves the backward Kolmogorov equation, i.e., the final value problem
\begin{equation}\label{pdeU}
 \partial_t u(t,\sigma)+\Ll u(t,\sigma) =0\COMMA\;\;\; u(T,\cdot)=\phi\COMMA\;\; t<T
\end{equation}
For all  observables $\phi$ satisfying \VIZ{macroscopic} we can estimate
$\partial_x u(t,\sigma)=u(t,\sigma^x) - u(t,\sigma)$ independently of $N$ since we have the following estimate.
\begin{lemma}\label{lemU}
Let $ u(t,\sigma)$ a solution of \VIZ{pdeU}
where $\Ll$ is the infinitesimal generator $\Ll f(\sigma) = \sum_x c(x,\sigma) (f(\sigma^x) - f(\sigma))$
defined by the rate function $c(x,\sigma)$ given in
\VIZ{arrh}. Then for any $t\le T$ 
\begin{equation}
     \sum_x \|\partial_x u(t,\cdot)\|_{\infty} \le C_T \sum_x \|\partial_x \phi \|_{\infty}
\end{equation}
\end{lemma}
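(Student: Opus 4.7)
The plan is to derive an evolution equation for the discrete derivative $w_x(t,\sigma):=\partial_x u(t,\sigma)$ and then control $V(t):=\sum_x \|w_x(t,\cdot)\|_\infty$ via a backward Gronwall inequality. The key algebraic step is a commutator identity. Starting from $\Ll u(\sigma)=\sum_y c(y,\sigma)\partial_y u(\sigma)$, I apply $\partial_x$ and observe that for every $y$ the identity $\partial_y u(\sigma^x)-\partial_y u(\sigma)=\partial_y w_x(\sigma)$ holds (for $y\neq x$ this uses $(\sigma^x)^y=(\sigma^y)^x$; for $y=x$ both sides equal $-2w_x(\sigma)$). Adding and subtracting $c(y,\sigma)\partial_y u(\sigma^x)$ inside $c(y,\sigma^x)\partial_y u(\sigma^x)-c(y,\sigma)\partial_y u(\sigma)$ and regrouping yields
\begin{equation*}
   \partial_x \Ll u(\sigma) = \Ll w_x(\sigma) + r_x(\sigma)\COMMA\qquad
   r_x(\sigma):=\sum_y \partial_x c(y,\sigma)\,\partial_y u(\sigma^x)\COMMA
\end{equation*}
so that $w_x$ solves the inhomogeneous backward equation $\partial_t w_x+\Ll w_x=-r_x$ with terminal data $w_x(T,\cdot)=\partial_x\phi$.

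Since the Markov semigroup $P_s=e^{s\Ll}$ is a contraction on $L^\infty(\Sigma_N)$, Duhamel's formula followed by the triangle inequality gives
\begin{equation*}
   \|w_x(t,\cdot)\|_\infty \le \|\partial_x\phi\|_\infty + \int_t^T \sum_y \|\partial_x c(y,\cdot)\|_\infty \|w_y(s,\cdot)\|_\infty\,ds\PERIOD
\end{equation*}
Summing over $x$ and setting $A:=\sup_y \sum_x \|\partial_x c(y,\cdot)\|_\infty$ produces the closed scalar inequality $V(t)\le V(T) + A\int_t^T V(s)\,ds$ with $V(T)=\sum_x\|\partial_x\phi\|_\infty$, and the backward Gronwall lemma then delivers $V(t)\le e^{A(T-t)}V(T)$, which is the desired estimate with $C_T:=e^{AT}$.

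The main obstacle, and the only place where the structural assumptions $\AONE$--$\ATWO$ on the interaction potentials enter, is proving that $A$ is finite uniformly in the system size $N$. For the Arrhenius rate \VIZ{arrh} and $x\neq y$ we have $U(y,\sigma^x)-U(y,\sigma)=\pm\bigl(J(y-x)+K(y-x)\bigr)$, and the elementary mean-value bound $|e^{-\beta a}-e^{-\beta b}|\le \beta e^{\beta M}|a-b|$ with $M:=\sup_{y,\sigma}|U(y,\sigma)|$, which is itself uniform in $N$ by the scaling $J(x-y)=L^{-d}\VL(n|x-y|/L)$ in $\AONE$ and the fixed range of $K$ in $\ATWO$, produces
\begin{equation*}
   |\partial_x c(y,\sigma)| \le \beta d_0 e^{\beta M}\bigl(|J(y-x)|+|K(y-x)|\bigr)\COMMA\qquad x\neq y\COMMA
\end{equation*}
while the diagonal term $|\partial_y c(y,\cdot)|$ contributes only $\BIGO(1)$. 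Summation in $x$ then reduces $A$ to a constant multiple of $\|J\|_1+\|K\|_1$, both of which are bounded independently of $N$ by $\AONE$--$\ATWO$, closing the argument.
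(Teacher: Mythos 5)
Your proof is correct and follows essentially the same route as the paper's: derive the evolution equation $\partial_t w_x + \Ll w_x = -\sum_y \partial_x c(y,\sigma)\,\partial_y u(\sigma^x)$ for the discrete gradient, bound $\partial_x c$ using the $L^{-d}$ scaling of $J$ and the fixed range of $K$, sum over $x$, and close with Gronwall. The only differences are presentational — you formalize the passage to the integral inequality via Duhamel and the $L^\infty$-contractivity of the Markov semigroup, and make the constant $A$ explicit via a mean-value bound on the exponential, where the paper simply records the orders of magnitude of $\partial_x c(z,\sigma)$ — so no substantive gap.
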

We continue with the proof of Theorem~\ref{weakThm}.
\begin{proof}
Using the martingale property we have for any smooth function $v(t,\sigma_0)$
and the process $\{\HS_t\}_{t\ge 0} $ with the generator $ \HLl$
\begin{equation*}
   \Es[v(T,\HS_T)] = \Es[v(0,\HS_0)] + \int_0^T \Es[(\partial_s + \HLl)v(s,\HS_s))]ds\PERIOD
\end{equation*}
Therefore 
\begin{eqnarray*}
\Es[\phi(\sigma_T)] -\Es[\phi(\HS)] & =&  \Es[u(0,\sigma_0)] -\Es[u(T,\HS_T)] \\
     &=&  \int_0^T \Es[(\partial_s + \HLl) u(s,\HS_s)]\, ds   \\
     &=&  \int_0^T \Es[ \HLl u(s,\HS_s) - \Ll u(s,\HS_s)]\, ds  \\
     &=& \int_0^T \Es\left[\sum_{x\in \LATT} \left( \HC(x,\HS_s) - c(x,\HS_s)\right)\partial_x u(s,\HS_s)\right]\, ds\PERIOD
\end{eqnarray*}
However, we can bound the last term by 
$$
 \|c -\HC\|_{\infty} \int_0^T\Es\left[\sum_{x\in \LATTN }|\partial_x u(s,\HS_s)| ds \right]
$$
We conclude by 
using Lemma~\ref{lemU} and noting that
$$
|\HC(x,\HS_s) - c(x,\HS_s)|\le d_0e^{-\beta \BARUL(k(x),\COP\HS))}\left| e^{-\beta \BARUL(x,\COP\HS_s)} -e^{-\beta \UL(x,\HS_s)} \right|\COMMA
$$
where using \VIZ{DeltaU},
$$\left| e^{-\beta \BARUL(x,\COP\HS_s)} -e^{-\beta \UL(x,\HS_s)} \right|\le Ce^{-\beta \BARUL(k(x),\COP\HS))}|\Delta_{q,N}(x,\sigma)|\le  CJ_L  \frac{q}{L} \|\nabla \VL \|_\infty
\COMMA
$$
for some  $C>0$.
\end{proof}

\section{Exact sampling of kMC dynamics}\label{exactsampling}

In addition to the approximate dynamics discussed so far, the ML-KMC method can also generate the {\it exact}  dynamics 
associated with the rates  $c(\sigma,\sigma')$ by appropriate choice of the coarse and reconstruction
rates, albeit at higher cost than the controlled-approximation dynamics $\HC(\sigma,\sigma')$.
More specifically, given $c(\sigma,\sigma')$ and $ \BARIT c(\eta,\eta')$,
$\crf(\sigma'|\eta',\sigma)$ can be  selected such that
\begin{equation}\label{eqRates}
    \BARIT{c}(\eta,\eta') \crf(\sigma'|\eta',\sigma)=\HC(\sigma,\sigma')\equiv c(\sigma,\sigma') \PERIOD
\end{equation}
In this case the ML-KMC method generates exactly the same stochastic process
with the direct kMC method achieving a perfect reconstruction. Relation
\VIZ{eqRates} ensures that processes $\{\HS_t \}_{t\ge 0}$ and $\{\sigma_t \}_{t\ge0}$
have the same generator $\HLl = \Ll$, %
which is sufficient to
prove that the two processes are identical, \cite{LIGGETT}.
As a specific example we demonstrate exact sampling via  the ML-KMC method for  the microscopic process  $\st$ generated  
by the Arrhenius rate for the adsorption-desorption mechanism in Section~\ref{microdynamics}.
For this model   the coarse space rate functions are explicitly
given by the coarse graining technique in Section~\ref{cgDynamics}. 
The   reconstruction rates that we present rely on 
{\it correcting} the error introduced by
coarsening at the null-event step.
Indeed, the rates on the coarse space 
$\CMspace$ corresponding to the compressed
interactions for $U(x,\sigma)$, \VIZ{microSL}, are given by
\begin{equation*}
  \bar{c}_a(k,\eta) = d_0 \left(Q-\eta(k)\right)\COMMA\;\;\; \bar{c}_d(k,\eta) = d_0\eta(k) e^{-\beta \bar{U}(k,\eta)}\COMMA
\end{equation*}
where 
$$
  \bar{U}(k,\eta) = \sum_{\substack{l\in \LATTC \\\ l\neq k}}[\BARIT K(k,l)+\BARIT J(k,l) ]\eta(l) + [\BARIT K(k,k)+\BARIT
     J(k,k)](\eta(k)-1)-\BARIT{h}(k)\PERIOD
$$
The reconstruction rates are explicitly defined by
\begin{equation}\label{ExactArrh} 
   \crf^a(x| k, \eta)=\frac{1-\sigma(x)}{Q-\eta(k)}\COMMA\;\;\;
   \crf^d(x| k, \eta)=\frac{\sigma(x)}{\eta(k)}e^{-\beta ( U(x,\sigma) -
   \bar{U}(k,\eta) )}\COMMA
\end{equation}
where we can also compare them to the reconstruction of the approximate dynamics in \VIZ{ApproxArrh}.
This choice of rates ensures  that the ML-KMC method generates the
same process $(\st, \Ll)$ since the two-level process has the rates
 $\HC(x,\sigma) =c(x,\sigma)$. Furthermore, 
 the quantities $U(x,\sigma) -
   \bar{U}(k,\eta)$ are better localized in the sense that they decay faster than $U(x, \sigma)$, 
hence it is easier to compress through truncation, \cite{AKPR}.
Finally, the detailed balance condition is 
satisfied with invariant measure $\mu_{N,\beta} (d\sigma)$; 
for completeness we give the proof in Appendix~\ref{AppendixDB}.

\medskip
\noindent
\begin{remark}
{\rm
In analogy to the path-wise decomposition \VIZ{MRates} of the
stochastic process, exact or controlled error equilibrium sampling has been
achieved with multilevel CGMC methods, \cite{KKPV}, based on an analogous
decomposition of the sampling probability measure 
$\mu(d\sigma)$, i.e., $\mu(d\sigma)=\bar\mu(d\eta)\nu(d\sigma|\eta)$, where $\nu$ defines
the reconstruction and $\bar\mu$ is the measure on the coarse space.
}
\end{remark}
\section{Acceleration and computational complexity}\label{Complexity}
The purpose of this section is to compare the efficiency  of a ML-KMC method with conventional methods.
 In the presence of   long-range interactions sampling with a
rejection-free algorithm is next to impossible due to the very high number of
classes in BKL-type methods, see for instance Table~\ref{Cost}. Hence we may inevitably be  forced to use a highly inefficient null
event algorithm. With the proposed ML-KMC  approach, a rather crude CG of the long-range potential gives 
rise to much fewer classes, thus we can sample at  the first level 
rejection-free using the BKL algorithm while the next level  can be null-event. 
 The ML-KMC algorithm is applied to the stochastic
lattice model for Arrhenius dynamics in Section~\ref{stlattice}, where we consider the global search implementation as
in the conventional stochastic simulation algorithm (SSA).
\begin{algorithm}\label{twoLevelkCGMC} Two-level ML-KMC \\
Given $\sigma, \eta=\COP\sigma$

\begin{description}
\item[Coarse level.] (Level I)
\mbox{}

\begin{description}
\item[Update.] 
(a) Calculate  transition rates $\BARIT c_a(k,\eta),\ \BARIT c_d(k,\eta)$, for all $k\in \LATTC$ and \\ $\BARIT\lambda^a_k(\eta)=\sum_{l<k} \BARIT c_a(l,\eta)$, 
$\BARIT\lambda^d_k(\eta)=\sum_{l<k} \BARIT c_d(l,\eta)$,
$\BARIT\lambda(\eta)= \BARIT\lambda_M^a(\eta)+\BARIT\lambda_M^d(\eta)$.
\item[Search.] Obtain  uniform random numbers $u_1, u_2\in [0,1)$.

If  $\BARIT\lambda_M^a(\eta) < u_1  $ adsorb  else desorb.
Assume that  adsorption is chosen, then 
find $k\in \LATTC$ such that $\BARIT\lambda^a_{k-1}(\eta) \le \BARIT\lambda_M^a(\eta) u_2 \le\BARIT\lambda^a_{k}(\eta)$.

\end{description}
\item[Microscopic level.] (Level II)
\mbox{}
\begin{description}
\item[Reconstruct.] Pick uniformly a site $x$ in the cell $C_k$.
\item[Accept/Reject.]  

Select a uniform random number $u\in [0,1)$, and define $ \crf(x|k, \eta)=\crf^{a,d}(x|k, \eta) $ 
according to the selected process in the coarse move, e.g. \VIZ{ApproxArrh} or \VIZ{ExactArrh}.
If $\lambda_{\trf}(\sigma) u \le \crf(x|k, \eta) $ accept and update the state at
the site $x$.
\end{description}
\item[Time update.] Update time from an exponential law  with  the parameter $\HL^*(\sigma)$, with $\HL^*(\sigma)=\BARIT\lambda(\eta)\lambda_{\trf}(\sigma) $.
\end{description}
\end{algorithm}
Here $\HL^*(\sigma)=\BARIT\lambda(\eta)\lambda_{\trf}(\sigma) $  is defined according to the sampling strategy. Specifically for the  exact sampling of Section~\ref{exactsampling}, 
\begin{eqnarray*}
     &\BARIT \lambda(\eta) = &\sum_k d_0(q-\eta(k)) + d_0\eta(k)e^{-\beta \BARIT U(k,\eta)}\COMMA  \\
     &\lambda_{\trf}(\sigma) =& q \max\left\{\frac{1}{q-\eta(k)}, \frac{1}{\eta(k)}e^{-\beta U^{*}}\right\} \COMMA 
\end{eqnarray*}
and for the approximating sampling of Section~\ref{ArrheniusMKMC},
\begin{eqnarray*} 
    & \BARIT \lambda(\eta)=   \sum_k d_0(q-\eta(k)) + d_0\eta(k)e^{-\beta \BARUL(k,\eta)}\COMMA  \\
    &\lambda_{\trf}(\sigma) = q \max\left\{\frac{1}{q-\eta(k)}, \frac{1}{\eta(k)}e^{-\beta U^{s*}}\right\} \COMMA  
\end{eqnarray*}
where $ U^*= \min_{x,\sigma}( U(x,\sigma) -\BARIT U(k,\eta))$ and  $ U^{s*}= \min_{x,\sigma}\US(x,\sigma) $.

The ML-KMC method provides an efficient balance between benefits and limitations
of the conventional null-event and rejection-free
methods, that we summarize in Table~\ref{Cost}.  
This  is achieved by (a) improving  the computational cost of a conventional rejection-free
method,  see    Table~\ref{table:3} and 
(b) increasing the successful events  of a  null-event method. 
An event is considered  successful when it is accepted and the system evolves to a new state.    
The cost per event of a   kMC algorithm can be divided in two categories. The {\it search} cost, the
computational cost to choose an event, and the cost of {\it updating} the rates when an event is performed.
In Table~\ref{Cost} the updating cost is realized as the number of operations
 necessary to calculate energy differences appearing and the search cost as the
length of the array from which the next event (site) is selected. We consider
global search and update algorithms for the comparison here, but we
mention that both the cost in the traditional rejection-free and ML-KMC
algorithms can be improved with the use of a sophisticated search/update algorithm, 
for example, with  binary tree methods, \cite{ACDV}. 
For the sake of comparison and completeness we describe  the conventional sampling algorithms, SSA, BKL and null event, 
in Appendix~\ref{SSABKL}.
The last column of  Table~\ref{Cost} reveals the acceleration of the method in generating successful events, for example,
when the system is at  the state $\sigma$ the rejection probability of a proposed  event in the ML-KMC algorithm is 
given by \VIZ{MLRejPr}.

Next we present another argument that reveals the fact that the proposed method improves the computational cost of  kMC algorithms.  
The number of classes in a BKL algorithm is determined by the level sets of $U(x, \sigma)$, in fact we can write 
$U(x, \sigma) : = \bar U(k, T\sigma)+ E(x,\sigma)$. Thus the level sets of $\bar U(k,\eta)$ are defined on a coarser
lattice, hence $U(x,\sigma)$ has many more ($q^d$ more) level sets. For a
potential decaying at the length $L$ on the microscopic lattice,  $U(x, \sigma)=\sum J(x-y) \sigma(y) = \BIGO(2^{dL})$ different 
values (and classes), while the coarse
interaction potential decays at a distance $L/q$ and $\bar U(k, T\sigma)=\sum_l\bar J(k, l) \eta(l)=\BIGO(q^{dL/q})=\BIGO(2^{dL \log(q)/q}) $ 
different values (and classes). Hence a BKL algorithm on the coarse space has, by the factor $2^{dL(1-\log(q)/q)}$,
less classes in the implementation of the BKL algorithm. 
Clearly  when the range of interactions $L$ is large, the number of classes
grows exponentially with $L$ and implementation of a microscopic BKL algorithm
is not feasible. Therefore sampling with a null-event algorithm is unavoidable.

\begin{table}[h]
\caption{Computational complexity  and  event rejection rate comparison for a single kMC step in one space dimension.}
  \begin{tabular}{lccc} 
\hline\noalign{\smallskip}
                      & Search          & Update                   & Rejection rate	\\
\noalign{\smallskip}\hline\noalign{\smallskip}
Rejection free (SSA)  & $\BIGO(N)$	& $\BIGO(L\times L)$	   &   0			 \\
Two-level ML-KMC (SSA) & $\BIGO( M )$    & $\BIGO(L/q\times L/q)$  & 1 - $\HL(\sigma)/\HL^*(\sigma)$ \\
Rejection free (BKL)  & $\BIGO( 2^L )$	& $\BIGO(L\times L)$	   &   0			 \\
Two-level ML-KMC(BKL)  & $\BIGO( 2^{L \log(q)/q})$ & $\BIGO(L/q\times L/q)$ & 1 -$\HL(\sigma)/ \HL^*(\sigma)$ \\
Null - event 	      & $ \BIGO(1)$     & $\BIGO( L) $             & 1- $\lambda(\sigma)/\lambda^*$ \\
\noalign{\smallskip}\hline\noalign{\smallskip}
 \end{tabular}
 \label{Cost}
 \centering
 \end{table}
The ML-KMC method achieves  acceleration of the rejection free simulations up of order $q$ when sampling for the same finite time interval. 
We also note that  since a transition to a new event is based on a single spin-flip,  the reconstruction  step is performed locally, 
confined  to a single coarse cell, a fact that improves further the computational cost of the method. 
The computational times (CPU) 
compared next are those needed for reaching the same real time $T$ with the conventional SSA and ML-KMC method, where we consider 
that CPU time is proportional 
to the computational complexity of the algorithms given in Table~\ref{Cost}. Let $n$ be the number of MC steps
necessary in a rejection-free method to reach real time $T$.
The corresponding necessary MC steps in a ML-KMC
 method are $m = n/\E_{\muN}\left[ p^{\multi}_{\SUCC}(\sigma)\right]$, where $ p^{\multi}_{\SUCC}(\sigma)=1-p^{\multi}_{\rej}(\sigma)$
is the   acceptance probability of an event when the system is in the state $\sigma$.
For the search algorithm the cost ratio of the microscopic SSA and the ML-KMC method   is
\begin{equation*}
    r_{s}=\frac{\CPU_{s,\mathrm{rej-free}}}{\CPU_{s,\multi}} \sim \frac{N n}{M m} = q\E_{\muN}[p^{\multi}_{\SUCC}]
\end{equation*}
and for  the update
\begin{equation*}
    r_{u}=\frac{\CPU_{u,\mathrm{rej-free}}}{CPU_{u,\multi}} \sim \frac{L^2 n}{ (L/q)^2 m} = q^2\E_{\muN}[p^{\multi}_{\SUCC}]\PERIOD
\end{equation*}

\section{Numerical experiments: an Ising-Curie-Weiss model}\label{sectionArrheniuskMC-Null}
We consider a benchmark problem with competing short and long-range
interactions that exhibits complex  multi-phase  behavior as captured in the phase diagrams 
in Figure~\ref{fig:2} and Figure~\ref{fig:9}.  
Exact solutions for the free energy in the thermodynamic
limit, $N\to \infty$, are known for the one-dimensional and two dimensional models, %
\cite{KAR}.
The energy of the system at the configuration $\sigma=\{\sigma(x), x\in\Lambda_N\} $ is defined
by the Hamiltonian
\begin{eqnarray}
   H_N(\sigma) &=& -\frac{K}{2}\sum_{x}\sum_{|x-y|=1}\sigma(x)\sigma(y)
                         -\frac{J}{2N}\sum_{x}\sum_{y\neq x} \sigma(x)\sigma(y)-h \sum \sigma(x) \label{Ising-Curie-Weiss}\\
                  &\equiv&  \HAMS(\sigma)+ \HAML(\sigma)+E(\sigma)\PERIOD \nonumber
\end{eqnarray}
The interactions involved in $\HAMS(\sigma)$ are   nearest-neighbor  with  the constant
strength $K$, while $\HAML(\sigma) $ represents the long-range interactions given by the potential $J$ with the range $L=N$
and $h$ is an external field. A closed form solution in the thermodynamic limit ($N\to \infty$) for the total coverage
$c_{\beta}(K,J,h)$ in the one-dimensional model was derived in \cite{KAR},
 \begin{equation}\label{cov}
       c_{\beta}(K,J,h) =\frac12  M_{\beta}(\frac14 K,\frac14 J,\frac12 h - \frac14 J - \frac14 K) +\frac12\COMMA
 \end{equation}
  where $M_{\beta}(K,J,h)$ is a solution (minimizer) of the  problem
 \begin{equation*} 
    \min_{m}\left( \!\frac{J}{2}m^2-\log(
         e^K\!\cosh(h+Jm)\!+\!(e^{2K}\!\sinh^2(h+Jm)\!+\!e^{-2K} )^{1/2})\!\right).
\end{equation*}
Depending on the system
parameters $c_{\beta}(K,J,h)$ can be a multivalued function and phase transitions may occur. 
We are interested in sampling the dynamical behavior of the system in the bi-stable regimes as well as in constructing the phase diagram
with respect to the external field $h$.

The computational examples demonstrate both acceleration of simulations with ML-KMC and the improved accuracy of ML-KMC contrasted 
with the CGMC simulations. The reference solution is obtained by the fully resolved microscopic simulation performed by the 
traditional null-event kMC. 
In numerical implementations we tested the three methods discussed previously:
\begin{description}
\item[{\rm (i)}  ] the direct null-event kMC, (Algorithm~\ref{null}),
\item[{\rm (ii)} ] the developed ML-KMC (Algorithm~\ref{twoLevelkCGMC}) sampling on the microscopic space,
\item[{\rm (iii)}] the null-event CGMC sampling on the coarse space only, i.e., with both short and long-range 
                   potentials coarse-grained and {\em without } corrections due to the reconstruction step in ML-KMC.
\end{description}
The energy difference $U(x,\sigma)$ appearing in the transition rates \VIZ{arrh} is
given by
\begin{equation*}
    U(x,\sigma)= K \sum_{|x-y|=1} \sigma(y) + J\sum_{y=1}^{N}\sigma(y) -h\PERIOD
\end{equation*}
For the ML-KMC method  we apply the potential splitting approach where the rates on the coarse space 
$\bar\Sigma_M$, at the first level of the method, are  defined  in \VIZ{cgdes} with the potential energy
\begin{equation*}
   \BARUL(k,\eta) = J\sum_{k=1}^M\eta(k) - \frac{h}{2} = J\sum_{y=1}^{N}\sigma(y) -\frac{h}{2}\COMMA
\end{equation*}
and the reconstruction rates at the second level of the method  are defined by
\VIZ{ApproxArrh}  with
\begin{equation*}
 \US(x,\sigma)=   K \sum_{|x-y|=1} \sigma(y) - \frac{h}{2}\PERIOD
\end{equation*}
To implement the null-event method we need a uniform upper bound of the rates
$\crf(x|k,\eta) $, \VIZ{ApproxArrh},  
\begin{equation}\label{lambdarfExample}
    \lambda_{\trf}(\sigma) =q\max\left\{\frac{1}{q-\eta(k)},\frac{1}{\eta(k)}\EXP{\beta (\frac{h}{2} +
           K_* )} \right\}
\end{equation}
where $\COP\sigma=\eta$ and $K_*=|\min\{0,K\}| $. 
Therefore the time step of the method, that is proportional to $\HL^*(\sigma)=\BARIT{\lambda}(\eta)\lambda_{\trf}(\sigma)$, 
clearly varies with the system state $ \sigma$
since   $\BARIT{\lambda}(\eta)=\sum_{k} [d_0 (q-\eta(k)) + d_0 \eta(k)e^{-\beta \bar{U}(k,\eta)}]$.

\medskip

Note that in this example the coarse-grained Hamiltonian $\BARHAML$ is exact, i.e.,
$\BARHAML(\eta) \equiv \HAML(\sigma)$, thus there is no approximation error due to coarse-graining the long-range
potential.
Therefore, while CGMC sampling is approximate, due to coarse-graining of both $\BARHAML(\eta) $ and $\BARHAMS(\eta) $, 
the ML-KMC method samples the exact microscopic process, i.e., $ \HC(x,\sigma)= c(x,\sigma)$ for all $x\in \LATT,\sigma\in\Sigma_N$.
This allows us to quantify the effect of splitting the potential function into short and long-range parts. 
For example,  Figure~\ref{fig:4} shows that the potential splitting is not introducing errors, 
which verifies the theoretical estimate for the information loss of the equilibrium distribution, Theorem~\ref{StatEstimate}.
The effect of the splitting is apparent only in the average acceptance rate of the method where the strength 
of the short-range interactions $K$  controls the rejection rate according to \VIZ{lambdarfExample}. 

\smallskip

In order to test the effect of coarse-graining in the ML-KMC method we modify the long-range
potential in the Hamiltonian \VIZ{Ising-Curie-Weiss} and
consider finite-range interactions, with the range $L<N$, and with the long-range part of the Hamiltonian
\begin{equation*}
    \HAML(\sigma)=\frac{J}{2L}\sum_{x}\sum_{|y- x|\le L} \sigma(x)\sigma(y)\PERIOD
\end{equation*}
For this case the proposed ML-KMC method is approximate, however, it still reduces significantly
the coarse-graining error of the direct CGMC sampling, since compressing of the
short-range part is avoided, see Figure~\ref{fig:5} and description below.

In all simulations we consider the one-dimensional model with the coarsening
parameter $q=N$ in the CGMC and ML-KMC methods, that is the coarse
space consists of one cell $k=1$ and the coarse variable $\eta$ is the total
coverage $\eta=\COP\sigma=\sum_{x\in \LATT} \sigma(x)$.

\medskip
\noindent
{\it Stationary dynamics and equilibrium sampling.} We demonstrate properties of the ML-KMC algorithm 
in the stationary regime by constructing (equilibrium) phase diagrams of the average coverage with respect to the 
external field $h$. We explore different regimes of the phase plane $K$-$J$. With the choice of the
potential parameters $K$ and $J$ corresponding to bi-stable regimes we observe that the ML-KMC algorithm
approximates properly the hysteresis behavior while the CGMC algorithm samples incorrect energy landscape 
and thus does not estimate the hysteresis behavior correctly. The coarse-graining parameter is set to $q=N$
both in ML-KMC and CGMC simulations. The ML-KMC method
avoids compressing the short-range interactions that introduce large error in the CGMC simulations. 
Figure~\ref{fig:2} depicts the hysteresis behavior in the case when the long-range potential is of
Curie-Weiss type, i.e., the interaction range is $L=N$, and hence it is coarse-grained exactly by block-spin 
coarse variables. However, coarse-graining the short-range, nearest-neighbor Ising, potential introduces
an error which leads to a wrong prediction of hysteresis in the CGMC simulation.
In Figure~\ref{fig:8} and \ref{fig:9} we chose the interaction range $L<N$ which also introduces coarse-graining
error in the coarse-grained long-range potential. However, the presented error analysis for the invariant measure
suggests that this error is small and thus the ML-KMC sampling, unlike the CGMC simulations, are in a good
agreement with estimates from the microscopic simulations. 

\begin{figure}[ht]
\centering
\includegraphics[angle=-0,width=0.8\textwidth,height=0.4\textheight]{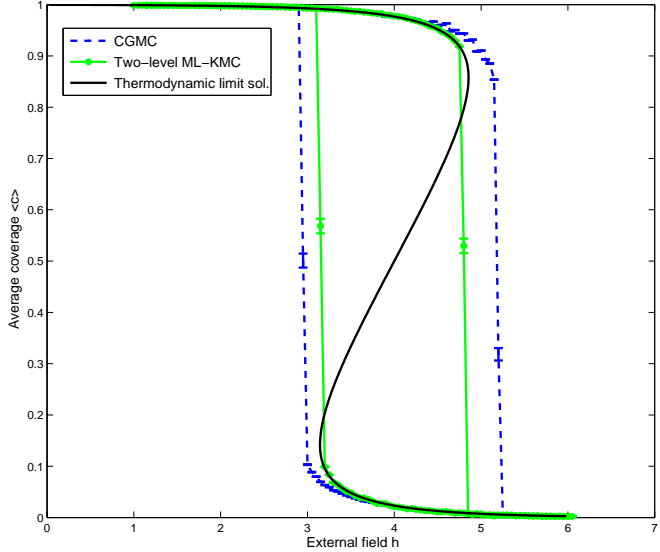}
\caption{Hysteresis simulation in a bi-stable regime. 
         Potential  parameters $K =3$, $J =5$,  $L=N$, and the lattice-size $N =1024$
         and the coarsening parameter $q=N$.
        }
\label{fig:2}
\end{figure}

\begin{figure}[ht]
\centering
\includegraphics[angle=-0,width=0.8\textwidth,height=0.4\textheight]{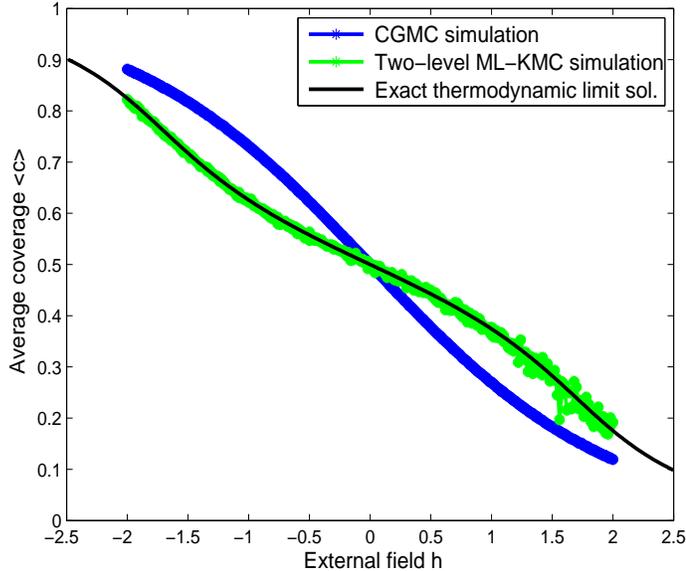}
  \caption{Hysteresis simulation in a single phase regime. 
          Potential parameters $K =-5$, $J =5$, $L=20$, the lattice size $N =256$,  and
          the coarsening parameter $q=N$.}
\label{fig:8}
\end{figure}

\begin{figure}[ht]
\centering
  \includegraphics[angle=-0,width=0.8\textwidth,height=0.4\textheight]{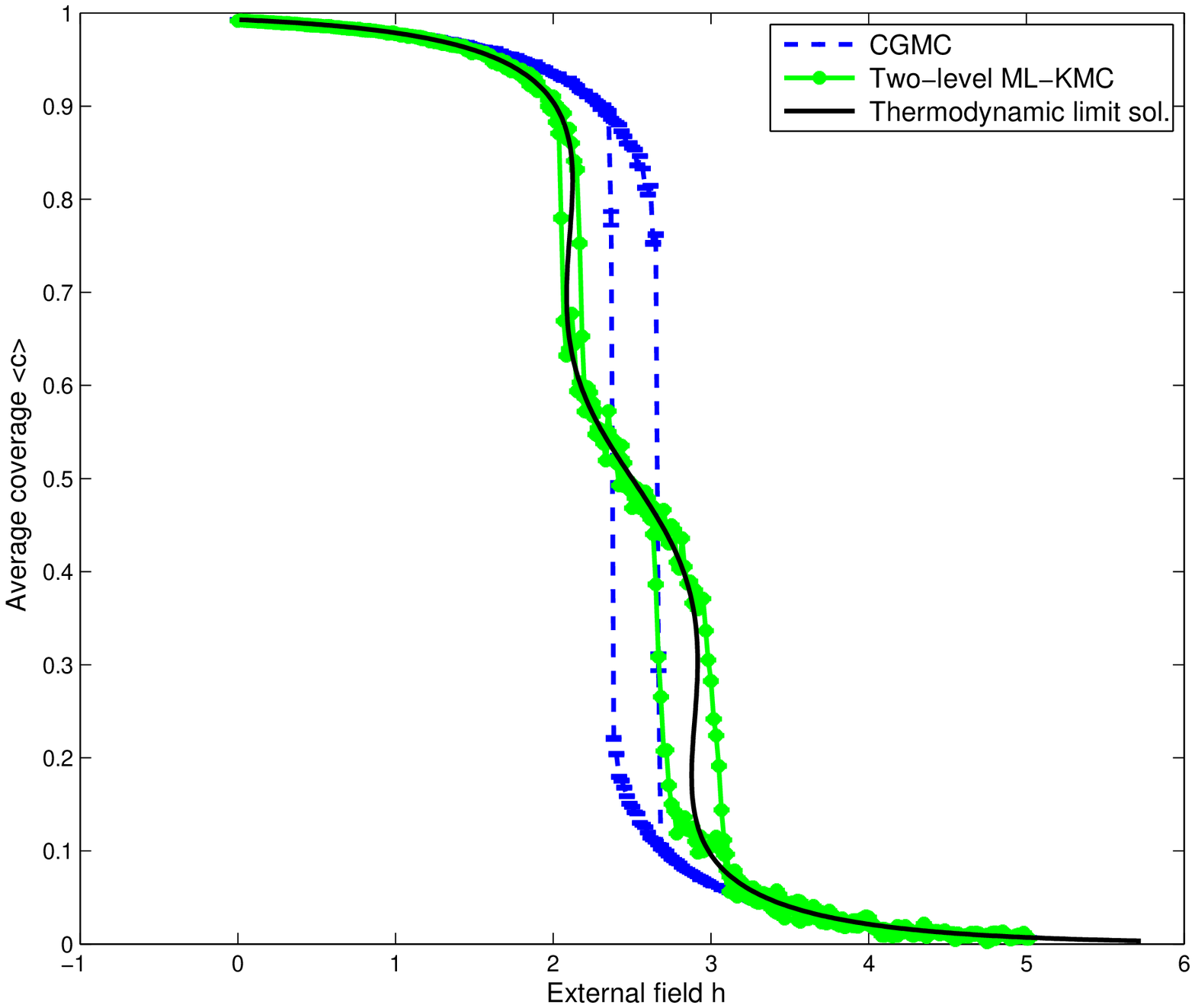}
  \caption{ Hysteresis simulation in a bi-stable regime.  
    Potential parameters $K =-5$, $J =10$, $L=100$, the lattice size $N =1024$, and the coarsening parameter $q=N$. }
\label{fig:9}
\end{figure}

 \medskip
\noindent
{\it Transients and dynamical sampling.} 
In the dynamical sampling we explore two quantities  of  interest:

\begin{description}
\item[{\rm (a)}] The {\it path-wise} behavior of the coverage, defined by $c(\sigma_t)
=\frac{1}{N}\sum_{x\in\LATT}\sigma_t(x)$ for the direct microscopic sampling,
$\HC (\HS_t) =\frac{1}{N}\sum_{x\in\LATT}\HS_t(x) $ for the two-level sampling
and $c (\eta_t) =\frac{1}{N}\sum_{k\in\LATTC}\eta_t(k) $ for the CGMC sampling.

\item[{\rm (b)}] The mean time to reach a transition from one equilibrium to another in the bi-stable regime, the {\it exit time}, 
$\tau= \EXPECT[T]$, $T=\inf\{t>0: c_t\ge C\}$. The probability
density functions (PDFs) $\rho_m $, $\rho_{\mathrm{tl}} $, $\rho_{\mathrm{cg}} $ for the exit time
estimated in the microscopic, the two-level ML-KMC and the CGMC methods
respectively are monitored. Starting from an initial state with the coverage $c_0=0$ in
all methods we record the time $\tau$ when the coverage exceeds the value $C=0.99$.
\end{description}

Estimating the observable $\tau$ tests both a proper approximation of the energy landscape as well as the correct
time-scale in approximating dynamics. The simulation is set for the parameters $K$, $J$ such that the system exhibits transition 
to an equilibrium which is, depending on the value of the external field $h$, stable or metastable (see Figure~\ref{fig:2}). 
We compare not only the expected (mean) values but also the probability density functions (PDFs) in order to demonstrate 
importance of error estimates in terms of the relative entropy. Probability density function was estimated 
from $10^4$ independent samples using the MATLAB estimator {\tt ksdensity} with a normal kernel function. 

Figure~\ref{fig:1} shows the comparison in the case of a single equilibrium state $c\approx 1.0$ (for the given value of $h$)
and the long-range potential which is coarse-grained exactly, i.e., $L=N$. We observe a good agreement in
all three methods, although the CGMC introduces a visible error due to coarse-graining of the short-range potential.

\begin{figure}[ht]
\centering
\includegraphics[angle=-0,width=0.8\textwidth,height=0.4\textheight]{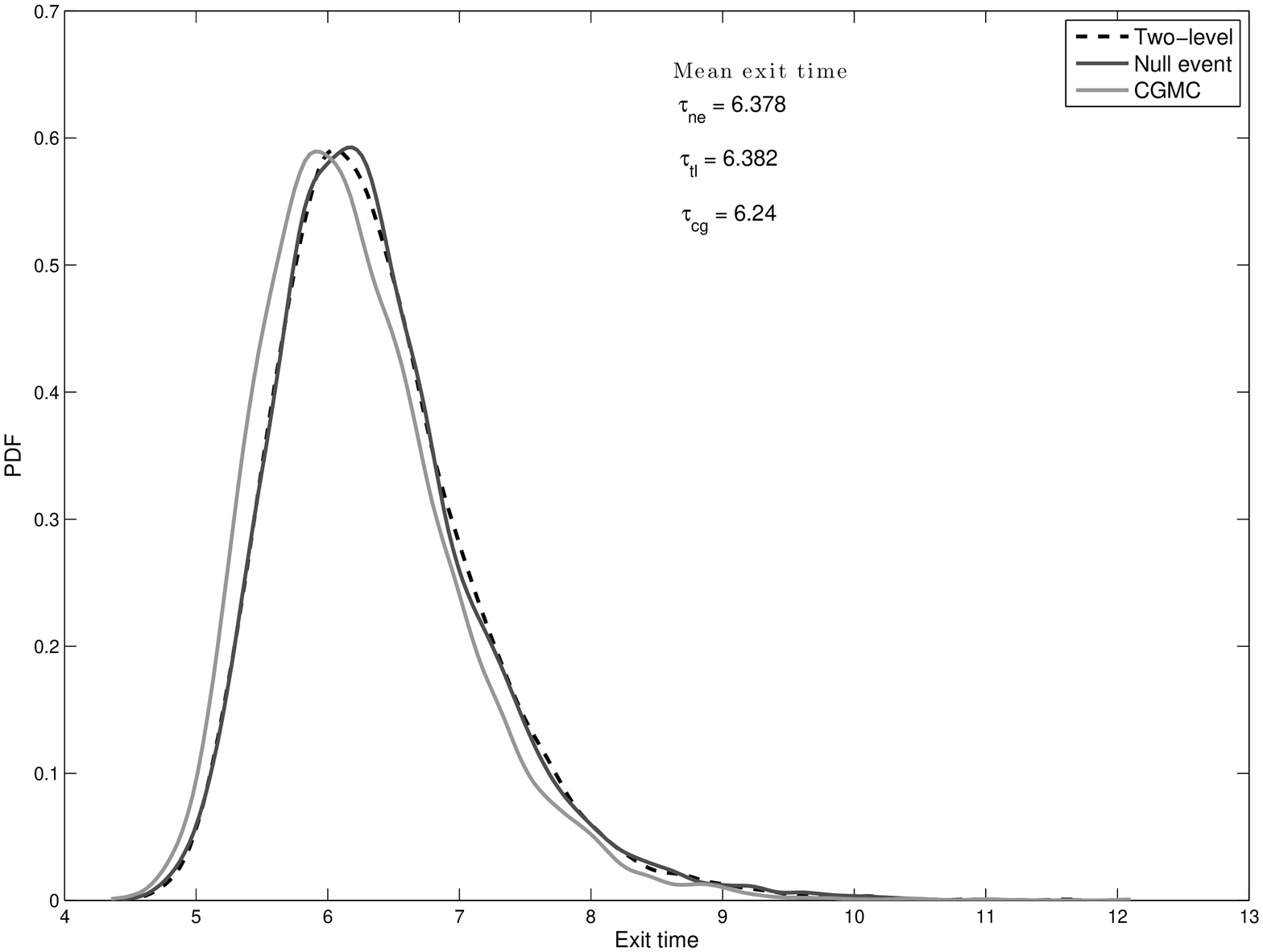}
   \caption{Comparing the probability density function of the exit time from the initial coverage $c_0=0$ to $c\geq 0.99$. 
            Potential parameters $K =2$, $J =5$, $h=2$, $L=N$, the lattice size $N =1024$
            and the coarse-graining parameter $q=N$.
            }
\label{fig:1}
\end{figure}

An additional error is introduced by coarse-graining the long-range potential with $L=100<N$. Comparison
of the exit time PDF in the case of a single equilibrium state $c\approx 1.0$ (for the given value of $h$)
is depicted  Figure~\ref{fig:6}. While ML-KMC simulations are in a good agreement with the microscopic simulation the
CGMC algorithm introduces significant error for the estimated PDF.

\begin{figure}[ht]
\centering
 \includegraphics[angle=-0,width=0.8\textwidth,height=0.4\textheight]{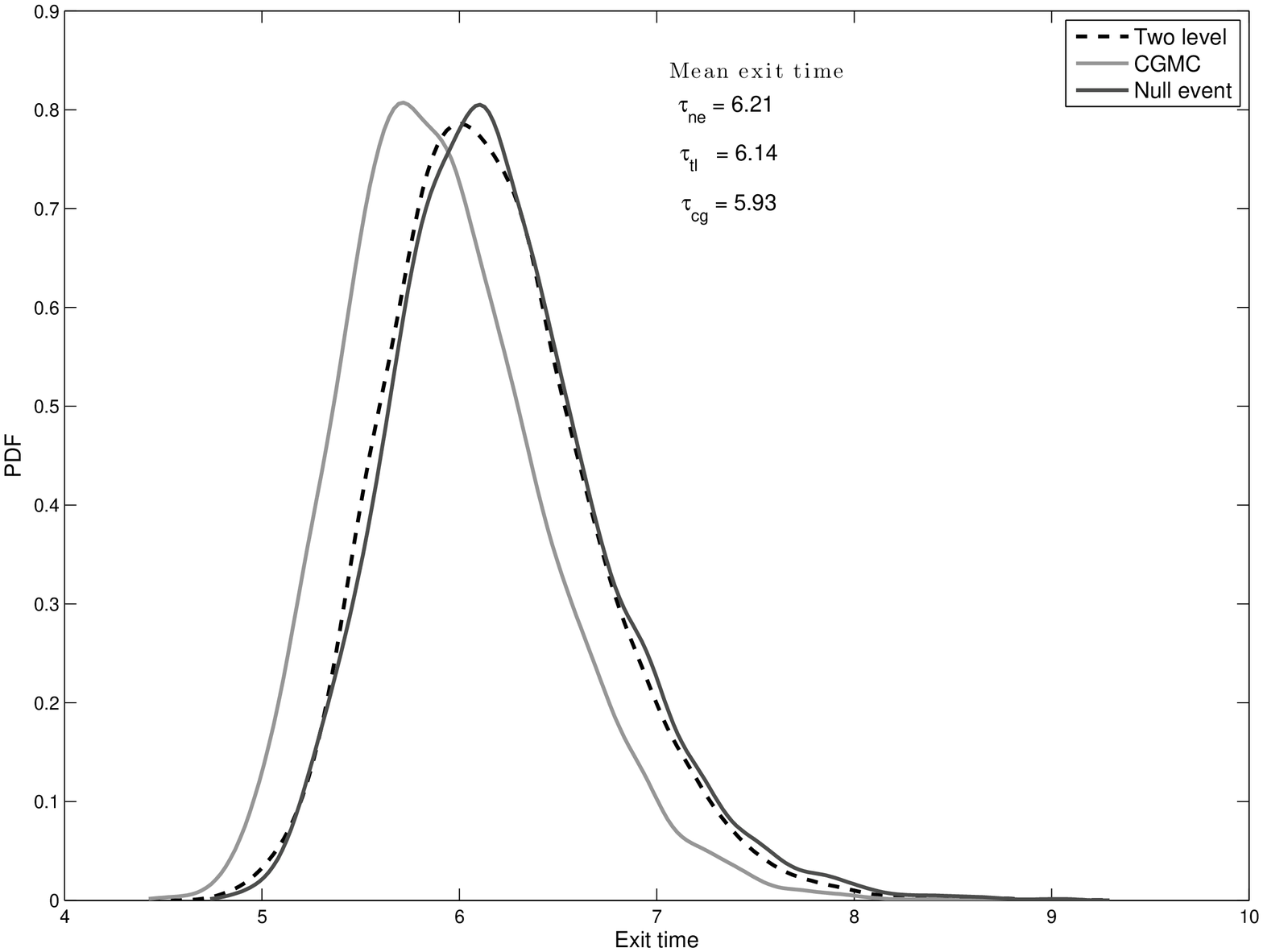}
\caption{Comparing the probability density function of the exit time from the initial coverage $c_0=0$ to $c\geq 0.99$. 
         Potential parameters $K =3$, $J =5$, $h=2.5$, $L=100$, the lattice size $N=1024$, and the coarse-graining parameter $q=N$.
        }
\label{fig:6}
\end{figure}

By adjusting the external field $h$ the sampling is performed in the bi-stable regime with two meta-stable 
equilibria. Coarse-graining both short and long-range potentials changes significantly the energy landscape and
the CGMC algorithm cannot capture the transition within the simulation time-window. The exit-time probability
distribution function is depicted in Figure~\ref{fig:5} showing that the ML-KMC algorithm is capable
of capturing the transition and approximate the exit-time PDF. The inset demonstrates that the CGMC
simulation was unable to estimate the mean exit time as no transition occurred and
the exit-time PDF is concentrated at the final time of the simulation window. This fact
is further visualized in Figure~\ref{fig:4} where evolution of the mean coverage is depicted. While 
the ML-KMC simulation results in a trajectory that approximates well the reference trajectory obtained
from microscopic null-event kMC with a transition from $c=0$ to $c\approx 1$ equilibrium, the trajectory
averaged in the CGMC simulation does not exhibit any transition in the simulation window.

\begin{figure}[ht]
\centering
 \includegraphics[angle=-0,width=0.8\textwidth,height=0.4\textheight]{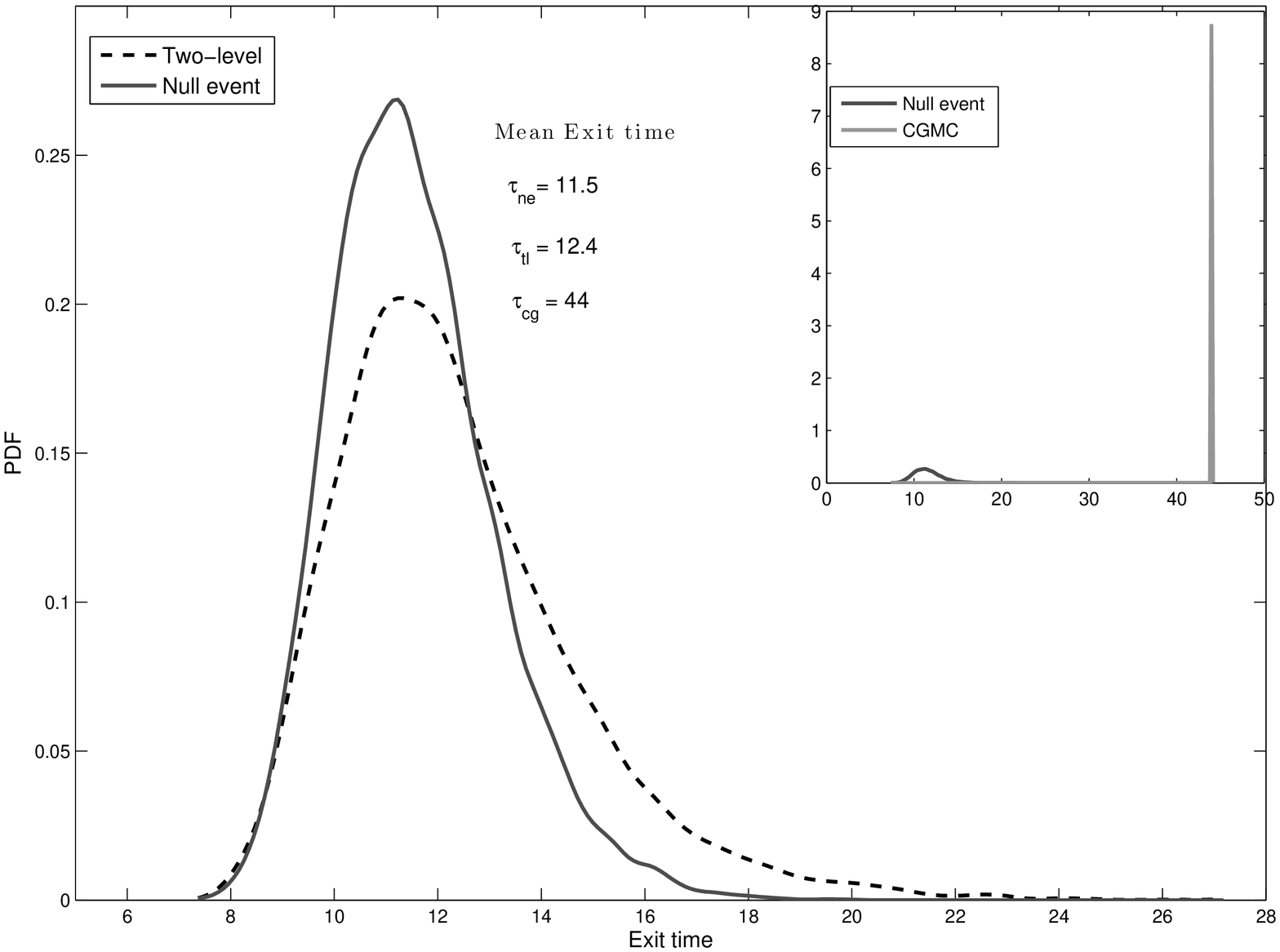}
\caption{Comparing the probability density function of the exit time from the initial coverage $c_0=0$ to 
         $c\ge 0.99$.  
         Potential parameters $K =3$, $J =5$, $h=3.1$, $L=100$, the lattice size $N=1024$, and the coarse-graining parameter $q=N$.
         Coarse-graining  error of CGMC that appears due to the finite-range interactions is substantially reduced 
         with the ML-KMC method (see the text for explanation of the inset).
         }
\label{fig:5}
\end{figure}

\begin{figure}[ht]
\centering
 \includegraphics[angle=-0,width=0.8\textwidth,height=0.4\textheight]{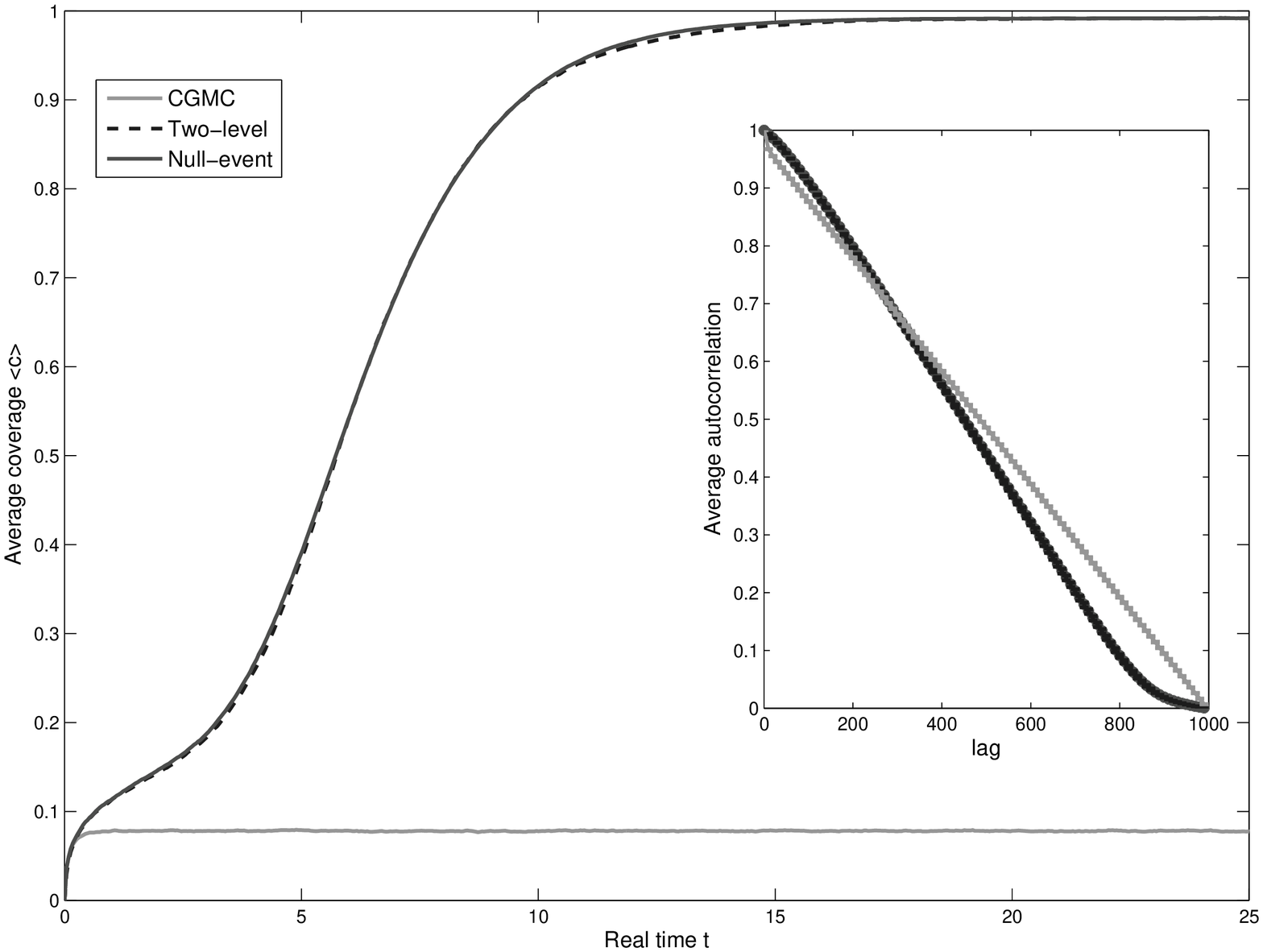}
\caption{Average coverage trajectory. As external field $h$ approaches critical value $h_c=4$ the coarse-graining error in CGMC becomes important. 
         However, the two-level ML-KMC simulations capture correctly transitions even when CGMC fails. 
         Potential parameters $K =3$, $J =5$, $h=3.1$, $L=N$, the lattice size $N=1024$, and the coarse-graining parameter $q=N$.
         The inset depicts the estimated autocorrelation function. 
         }
\label{fig:4}
\end{figure}

In Table~\ref{table:2} we compare numerical results for the exit time and the corresponding computational times 
of the three algorithms for different values of the potential parameters.  
For the finite range $L<N$ interactions, where coarse-graining error is present,  
we see that the ML-KMC method estimates are closer 
to the microscopic (conventional) method even  when the CGMC method fails.   
Furthermore, we see a significant acceleration of the computational time both with the CGMC 
and the ML-KMC method.   

\begin{table}[ht]
\caption{Approximation of the exit time $\tau$.   
For the statistics  we use $10^4$ samples and present the $95\% $ confidence interval.
The  potential parameter $J=5$, the coarse-graining parameter $q=N$ and the lattice size $N=1024$ are fixed. }
\centering  
\begin{tabular}{lc c  c c c c }  
\hline\noalign{\smallskip} 
Parameters & $\tau_m$       & $\tau_{\mathrm{tl}}$ & $\tau_{\mathrm{cg}}$  & $\CPU_m$   & $\CPU_{\mathrm{tl}}$& $\CPU_{\mathrm{cg}}$\\
           & microscopic    & ML-KMC               & CGMC                  &  [sec]     &    [sec]            &   [sec]              \\
 [0.5ex] \noalign{\smallskip}\hline\noalign{\smallskip}
$L=N $     &                &                      &                       &            &                     &     \\
$K=0,h=1$  & 28.5 $\pm$0.8  & 28.3$\pm$0.8         & 28.7$\pm$0.8          & 1534       & 9                   &8    \\
$K=2,h=2$  & 6.40$\pm$0.03  & 6.40$\pm$0.03        & 6.20$\pm$0.02         & 884        & 6                   &5    \\
$L=100 $   &                &                      &                       &            &                     &     \\
$K=3,h=2.5$& 6.20$\pm$0.02  & 6.1$\pm$0.03         & 5.93$\pm$0.02         & 158        & 9                   & 7   \\
$K=3,h=3.1$& 11.50$\pm$0.06 & 12.4$\pm$0.1         & 44.0$\pm$0.1          & 526        & 45                  & 100 \\
\hline  
\end{tabular}
\label{table:2}
\end{table}

\begin{table}[ht]
\caption{CPU time (seconds): The evolution final time $T=20$, the potential parameters $K=1$, $J=5$, $h=2.5$, $L =N$, and
         the coarse-graining parameter $q = N$}
\centering  
\begin{tabular}{lc c  }  
\hline\noalign{\smallskip} 
Lattice size $N$      &  Null event  &   ML-KMC \\
 [0.5ex] \noalign{\smallskip}\hline\noalign{\smallskip}
        512           &     9        &    0.5   \\
        1024          &    33        &    0.9   \\ 
        2048          &   131        &    1.7   \\
        4096          &   514        &    4     \\
        8192          &   2143       &    13    \\
\hline  
\end{tabular}
\label{table:3}
\end{table}

\appendix

\section{Detailed balance}\label{AppendixDB}
\subsection{Exact dynamics} 
The rate $\HC(x,\sigma) $, \VIZ{ExactArrh}, satisfies the detailed balance condition with
$\mu_{N,\beta} (d\sigma)$, \VIZ{gibbs}, i.e.,
\begin{eqnarray*}
   \HC(x,\sigma) e^{-\beta H_N(\sigma)} = \HC(x,\sigma^x) e^{-\beta H_N(\sigma^x)}\COMMA
\end{eqnarray*}
since $\HC(x,\sigma) =c(x,\sigma)$ for all $\sigma \in \Sigma_N$ and $x\in\LATT$, 
and $c(x,\sigma)$ satisfies \VIZ{DB}. We have $\HC(x,\sigma) =c(x,\sigma)$  since
\begin{eqnarray*}  
   \HC_a(x,\sigma)= \bar{c}_a(k,\eta) \crf^a(x|k,\eta) &=&d_0 (q-\eta(k))\frac{1-\sigma(x)}{q-\eta(k) } \\
                      &=&d_0 (1-\sigma(x)) = c_a(x,\sigma)  
\end{eqnarray*}
and
\begin{eqnarray*} \label{equalCd}
\HC_d(x,\sigma)= \bar{c}_d(k,\eta)   \crf^d(x|k,\eta)   
&=&d_0 \eta(k)e^{-\beta \bar{U}(k,\eta)} \frac{ \sigma(x)}{ \eta(k) }
e^{-\beta[U(x,\sigma)-\bar{U}(k,\eta)]} \\
  &=&d_0  \sigma(x) e^{-\beta U(x,\sigma)}= c_d(x,\sigma) \nonumber \PERIOD
\end{eqnarray*}
\subsection{Approximate dynamics} 
The approximate reaction rates $\HC(x,\sigma)$  defined in \VIZ{ApprRates}
satisfy the DB condition with invariant measure $\widetilde\mu_{N, \beta}(d \sigma)$ \VIZ{approxGibbs}.
Indeed, if  we denote $\HC_a(x,\sigma)= \bar{c}_a(k,\eta) \crf^a(x|k,\eta) $ 
and $\HC_d(x,\sigma)=  \bar{c}_d(k,\eta)   \crf^d(x|k,\eta)  $ we have
\begin{eqnarray*}
   \HC_a(x,\sigma) e^{-\beta \widetilde H_N(\sigma)} &&= \left[ \bar{c}_a(k,\eta)
   \crf^a(x|\eta) \right] e^{-\beta \widetilde H_N(\sigma)} \\
   &&= \left[ d_0(q-\eta(k))\frac{1-\sigma(x)}{q-\eta(k)} \right] e^{-\beta (\widetilde
   H_N(\sigma^x)-(2\sigma(x) -1)\HU(x,\sigma) )}\\
   &&= \left[ d_0 (1-\sigma(x)) e^{-\beta \HU(x,\sigma)} \right] e^{-\beta \widetilde H_N(\sigma^x)} \\
   &&= \left[ d_0 \sigma^x(x)e^{-\beta \HU(x,\sigma)} \right] e^{-\beta \widetilde H_N(\sigma^x)} \\
   &&= \HC_d(x,\sigma^x)   e^{-\beta \widetilde H_N(\sigma^x)} \COMMA
\end{eqnarray*}
and similarly
\begin{eqnarray*}
   \HC_d(x,\sigma) e^{-\beta \widetilde H_N(\sigma)} &&= \left[ \bar{c}_d(k,\eta)
   \crf^d(x|\eta) \right] e^{-\beta \widetilde H_N(\sigma)} \\
   &&= \left[ d_0 \eta(k) e^{-\beta \bar{U}_l(k,\eta)} \frac{ \sigma(x)}{ \eta(k)}
      e^{-\beta \US(x,\sigma) } \right] e^{-\beta (\widetilde H_N(\sigma^x)-(2\sigma(x)-1)\HU(x,\sigma) )}\\
   &&= \left[ d_0 \sigma(x) e^{-\beta (\bar{U}_l(k,\eta) + \US(x,\sigma))} e^{\beta\HU(x,\sigma) } \right] 
      e^{-\beta \widetilde H_N(\sigma^x)} \\
   &&= \left[  d_0 (1-\sigma^x(x) ) \right]    e^{-\beta \widetilde H_N(\sigma^x)}  \\
   &&= \HC_a(x,\sigma^x)    e^{-\beta \widetilde H_N(\sigma^x)}\PERIOD
\end{eqnarray*}
\section{Proof of Lemma~\ref{lemU}}\label{ProofOfLemma}
For the sake of completeness we also give the proof of Lemma~\ref{lemU} which was proved in \cite{KPS}.
\begin{proof}
We denote by $\nabla_{\sigma} \phi(\sigma) =
\left(\partial_x\phi(\sigma)\right)_{x\in \LATT}$ and $ c(\sigma) = \left( c(x,\sigma)\right)_{x\in\LATT} $.
The equation \VIZ{pdeU} can be rewritten as 
\begin{equation*}
 \partial_t u(t,\sigma)+ c(\sigma)\cdot \nabla_{\sigma} u(t,\sigma) =0\PERIOD
\end{equation*}
For the discrete difference $\partial_x u$ we obtain the equation
\begin{equation*} 
  \partial_t \left(\partial_x u(t,\sigma)\right) + c(\sigma)\cdot
  \nabla_{\sigma} \partial_x u(t,\sigma) + \partial_x c(\sigma)\cdot
  \nabla_{\sigma} \partial_x u(t,\sigma^x) =0\PERIOD
\end{equation*}
From the definition of the rates $ c(x,\sigma)$, \VIZ{arrh}, we can
 estimate upper bounds for $\partial_x c(\sigma)$. Using that
$ \partial_x U(z,\sigma) = U(z,\sigma^x) -U(z,\sigma) =
K(x-z)(1-2\sigma(x)) + J(x-z)(1-2\sigma(x)) $ when $z\neq x $, and
$\partial_x U(z,\sigma)=0$ when $z=x$ we have
\begin{equation*}
 \partial_x c(z,\sigma) =
\begin{cases}
\BIGO(1), \;  &\textrm{ for } z=x\COMMA\\
   \BIGO(1)  , \; &\textrm{ for } 0< |z-x|\le S\COMMA \\
   \BIGO(1/L), \;  & \textrm{ for }  S < |z-x|\le L\PERIOD \\
\end{cases}
\end{equation*}
Then, since $ \Ll v(\sigma)=c(\sigma)\cdot \nabla_{\sigma} v(\sigma)$,
we can write
\begin{eqnarray*}
&& \partial_t \left(\partial_x u(t,\sigma)\right) + \Ll \partial_x
     u(t,\sigma) + \sum_{z\in \LATT}\partial_x c(z,\sigma) \partial_z
     u(t,\sigma^x) =0\COMMA \\
&& \partial_t \left(\partial_x u(t,\sigma)\right) + \Ll \partial_x u(t,\sigma) +
     \BIGO(1)\partial_x u(t,\sigma^x)\\
&&\;\; +\BIGO(1) \sum_{|z-x|\le S}  \partial_z u(t,\sigma^x) +
         \BIGO(\frac{1}{L}) \sum_{S<|z-x|\le L}  \partial_z u(t,\sigma^x)=0\PERIOD
\end{eqnarray*}
Furthermore, we have 
\begin{multline}
\| \partial_x u(t,\cdot) \|_{\infty} \le \| \partial_x u(0,\cdot) \|_{\infty}
+\int_{t}^T \BIGO(1) \| \partial_x u(s,\cdot) \|_{\infty} \,ds + \\
+\int_{t}^T \BIGO(1) \sum_{|z-x|\le S} \| \partial_x u(s,\cdot) \|_{\infty}\, ds +
\int_{t}^T \BIGO(\frac{1}{L}) \sum_{S<|z-x|\le L} \| \partial_x u(s,\cdot)\|_{\infty}\, ds\PERIOD
\end{multline}
Based on this relation, application of Gronwall's inequality for
$\theta(t)=\sum_x\| \partial_x u(t,\cdot) \|_{\infty} $ and the fact that $S$ is
finite and small we conclude
$$
\sum_x\| \partial_x u(t,\cdot) \|_{\infty}\le e^{c(T-t)}\sum_x\| \partial_x
u(0,\cdot) \|_{\infty} \PERIOD
$$
\end{proof}

\section{Kinetic Monte Carlo algorithms}\label{SSABKL}
Stochastic simulation algorithm (SSA) as proposed in \cite{GILLESPIE} is
described next, where the evolution from a state $\sigma $ to $\sigma^x$ is
sampled by:
\begin{algorithm}\label{SSA} Stochastic Simulation Algorithm. \\
\begin{description}
\item[Step 1: Update.]
       (a) Calculate all rates $c(y,\sigma), \forall y\in \Lambda_N $ form \VIZ{arrh},
          that are affected from the previews event.\\
       (b) Calculate $\lambda_x(\sigma)=\sum_{y<x} c(y,\sigma)$ and
$\lambda(\sigma)=\sum_{y\in \Lambda_N} c(y,\sigma)$
\item[Step 2: Search.] Obtain a uniform random number $u\in [0,1)$ and
      search for $x \in \Lambda_N$ such that
      $$ 
           \lambda_{x-1} (\sigma) < \lambda(\sigma)  u \le \lambda_x(\sigma) 
      $$  
\item[Step 3] Update time from an exponential law  with  the parameter $\lambda(\sigma)$ or 
      equivalently with the mean $\Delta t = \frac{1}{\lambda(\sigma)}$.
      That is select  a uniform random number $u_1\in [0,1)$ and update the
      time as $t'= t + \delta t$ with   $\delta t = - \log(u_1)\Delta t$.
\end{description}
\end{algorithm}
\medskip
\noindent
The $n$-fold way (or BKL) algorithm, \cite{BKL}, is equivalent to the SSA in the
sense that it always leads to a successful event and requires the updating of
all transition rates. The BKL algorithm was already designed to reduce the cost of the searching process by dividing 
the transition states into classes (the number of classes $n\ll N$) with
the same probability, thus the search algorithm cost depends on the number of
processes at each site, i.e., scales linearly with the reaction range 
(the number of interacting neighbours) in the model under consideration.

\begin{algorithm}\label{BKL} $n$-fold algorithm (BKL). \\
Given $\sigma$
\begin{description}
\item[Step 1: Update.]
   (a) Calculate all rates $c(y,\sigma), \forall y\in \Lambda_N $ that are affected by the previous event.
\item[Step 2: Search.]
   Group sites $x\in \LATT$ in classes $D_i$, $i=1,\dots,n$ by classifying them with their rate values and define 
   $$
      Q_j(\sigma) = \sum_{i=1}^j\sum_{y\in D_i} c(y,\sigma) = \sum_{i=1}^j |D_i| c(y,\sigma)
   $$ 
   for some $y\in D_i$, $j = 1,\dots,n$. Generate a uniform random number $u\in [0,1)$ and search for $i = 1,\dots, n $such that
   \begin{equation*}
       Q_{i-1} (\sigma)< Q_{n}(\sigma) u \le  Q_{i}(\sigma)\COMMA
   \end{equation*}
    then choose $x\in D_i$ uniformly.
\item[Step 3] Update time from the exponential law  with  the parameter $\lambda(\sigma) = Q_n(\sigma)$, or equivalently with
    the mean $\Delta t = \frac{1}{\lambda(\sigma)}$. %
\end{description}
\end{algorithm}

\medskip
\noindent
The previous two algorithms are in the class of rejection-free methods as the embeded Markov chain always jumps into a new
state. However, by applying the uniformization we obtaine a null-event algorithm in which the embeded chain has nonzero probability
to stay at the same state in each step.
\begin{algorithm}\label{null} Null-event algorithm. \\
Find the bounds $\lambda^{*,\mathrm{loc}}= d_0\max\{1, e^{-\beta U^*}\}$, and $U^* =\min_{x,\sigma} U(x,\sigma) $.\\
Given $\sigma$
\begin{description}
\item[Step 1: Search/Update.]  Select a site $x \in \LATT$ with the uniform probability $\frac{1}{N}$ and calculate $c(x,\sigma)$.
\item[Step 2:Accept/Reject.] Obtain a uniform random number $u\in [0,1)$,\\ 
    if $c(x,\sigma) \geq \lambda^{*,\mathrm{loc}} u$  accept  and update the state $\sigma \to \sigma^x$ at the site $x$, \\
    if $c(x,\sigma) < \lambda^{*,loc} u$ assign the new state to be $\sigma$.
\item[Step 3]  Update time from the exponential law  with  the parameter $\lambda^{*,\mathrm{loc}}$, or equivalently
    with the mean $\Delta t = \frac{1}{\lambda^{*,\mathrm{loc}} }$.
 \end{description}
\end{algorithm}

\bibliographystyle{siam}

\end{document}